\newtheorem{thm}{Theorem}[section]
\newtheorem{remark}[thm]{Remark}
\newtheorem{prop}[thm]{Proposition}
\newtheorem{coro}[thm]{Corollary}
\newtheorem{claim}[thm]{Claim}
\newtheorem{lemma}[thm]{Lemma}
\newtheorem*{thmA}{Theorem A}
\newtheorem*{thmB}{Theorem B}
\newtheorem*{thmAm}{Theorem A$_m$}
\newtheorem*{thmBm}{Theorem B$_m$}
\def\dim{\operatorname{dim}}
\def\corank{\operatorname{corank}}
\def\Pic{\operatorname{Pic}}%
\def\Aut{\operatorname{Aut}}%
\def\Div{\operatorname{Div}}%
\def\Ext{\operatorname{Ext}}%
\title{On the Tschirnhausen module of coverings of curves on decomposable ruled surfaces and applications}
\author[Y. Choi]{Youngook Choi}
\address{Department of Mathematics Education, Yeungnam University, 280 Daehak-Ro, \hfill \newline\texttt{}
 \indent Gyeongsan, Gyeongbuk 38541, Republic of Korea}
\email{ychoi824@yu.ac.kr}
\author[H. Iliev]{Hristo Iliev}
\address{American University in Bulgaria, 2700 Blagoevgrad, Bulgaria, and \hfill \newline\texttt{}
 \indent Institute of Mathematics and Informatics, Bulgarian Academy of Sciences, \hfill \newline\texttt{}
 \indent 1113 Sofia, Bulgaria}
\email{ hiliev@aubg.edu, hki@math.bas.bg}
\author[S. Kim]{Seonja Kim}
\address{Department of Electronic Engineering, Chungwoon University, Sukgol-ro, Nam-gu, \hfill \newline\texttt{}
 \indent Incheon 22100, Republic of Korea}
\email{sjkim@chungwoon.ac.kr}
\thanks{The first author was supported by the National Research Foundation of Korea(NRF) grant funded by the Korea government(MSIT) (RS-2024-00352592).
The second author was supported by Grant KP-06-N 62/5 of Bulgarian National Science Fund.
The third  author was supported by the National Research Foundation of Korea(NRF) grant funded by the Korea government(MSIT) (2022R1A2C1005977).}
\date{\today}
\begin{document}

\setlength{\parindent}{5ex}

\begin{abstract}
We show that for two classes of $m$-secant curves $X \subset S$, with $m \geq 2$, where $f : S = \mathbb{P} (\mathcal{O}_Y \oplus \mathcal{O}_Y (E)) \to Y$ and $E$ is a non-special divisor on a smooth curve $Y$, the Tschirnhausen module $\mathcal{E}^{\vee}$ of the covering $\varphi = f_{|_X} : X \to Y$ decomposes completely as a direct sum of line bundles. Specifically, we prove that:
for $X \in |\mathcal{O}_S (mH)|$, where $H$ denotes the tautological divisor on $S$, one has
$
\mathcal{E}^{\vee} \cong \mathcal{O}_Y (-E) \oplus \cdots \oplus \mathcal{O}_Y (-(m-1)E)
$;
for $X \in |\mathcal{O}_S (mH + f^{\ast}q))|$, where $q$ is a point on $Y$,
$
 \mathcal{E}^{\vee} \cong \mathcal{O}_Y (-E-q) \oplus \cdots \oplus \mathcal{O}_Y (-(m-1)E-q)
$
holds.
This decomposition enables us to compute the dimension of the space of global sections of the normal bundle of the embedding $X \subset \mathbb{P}^R$ induced by the tautological line bundle $|\mathcal{O}_S (H)|$, where $R = \dim |\mathcal{O}_S (H)|$. As an application, we construct new families of generically smooth components of the Hilbert scheme of curves, including components whose general points correspond to non-linearly normal curves, as well as nonreduced components.
\end{abstract}

\subjclass[2020]{Primary 14C05; Secondary 14H10}
\keywords{Hilbert scheme of curves, ruled surfaces, triple coverings, curves on cones.}

\maketitle

\section{Introduction}\label{Sec1}

Let $Y$  be a smooth irreducible projective curve of genus $\gamma$ and $E$ be a non-special divisor of degree $e$ on $Y$. Consider the ruled surface $S := \mathbb{P} (\mathcal{O}_Y \oplus \mathcal{O}_Y (E))$, following Hartshorne's conventions as in \cite{Hart77}, with natural projection morphism
\begin{equation}\label{f_S_to_Y}
 f : S = \mathbb{P} (\mathcal{O}_Y \oplus \mathcal{O}_Y (E)) \to Y .
\end{equation}
Let $\sigma_0 : Y \to S$ be the section corresponding to the surjection
$
 \mathcal{O}_Y \oplus \mathcal{O}_Y (E) \twoheadrightarrow \mathcal{O}_Y
$,
and let $Y_0 := \sigma_0 (Y)$ be the curve of minimum self-intersection on $S$. Note that $Y^2_0 = -e$. Denote by $H$ the tautological divisor on $S$, that is, the one for which $\mathcal{O}_{S} (1) \cong \mathcal{O}_{S} (H)$. Recall that
\[
 \Pic (S) \cong \mathbb{Z} [H] \oplus f^{\ast} (\Pic (Y)) ,
\]
and for a decomposable $S$ as in \eqref{f_S_to_Y}, we have
\[
 H \sim Y_0 + f^{\ast} E ,
\]
where $f^{\ast} D$ and $f^{\ast} L$ denote the corresponding pullback divisor and pullback line bundle for $D \in \Div (Y)$ and $L \in \Pic (Y)$.

Suppose that $X \subset S$ is an $m$-secant smooth curve on $S$, that is, $X \sim mY_0 + f^{\ast} D$, where $m \geq 1$ is an integer. In such a case,  the restriction of $f$ on $X$ determines an $m:1$ covering
\begin{equation*}\label{varphi_X_to_Y}
 \varphi := f_{|_X} : X \to Y .
\end{equation*}

Our first result concerns the case when $X \sim mH$.

\begin{thmA}\label{thmARef}
Let $Y$ be a smooth projective curve, and $E$ be a non-special divisor on $Y$. Consider the ruled surface $f : S = \mathbb{P} (\mathcal{O}_Y \oplus \mathcal{O}_Y (E)) \to Y$, and let $X \subset S$ be a smooth curve linearly equivalent to $mH$, where $H$ denotes the tautological divisor on $S$ and $m \geq 2$ is an integer. Denote by $\varphi := f_{|_X}$ the restriction of $f$ to $X$. Then $\varphi$ is a finite morphism of degree $m$, and we have the following decomposition:
 \begin{equation}\label{ThmA_Decomp}
  \varphi_{\ast} \mathcal{O}_X \cong \mathcal{O}_Y \oplus \mathcal{O}_Y (-E)  \oplus \cdots \oplus \mathcal{O}_Y (-(m-1)E) .
 \end{equation}
In particular, the Tschirnhausen module $\mathcal{E}^{\vee}$ associated to the covering $\varphi$ is given by:
\[
 \mathcal{E}^{\vee} \cong \mathcal{O}_Y (-E)  \oplus \cdots \oplus \mathcal{O}_Y (-(m-1)E) .
\]
\end{thmA}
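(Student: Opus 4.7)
My approach is to analyze $\varphi_* \mathcal{O}_X$ by pulling the computation back to the ambient ruled surface $S$, where the decomposable projective bundle structure gives explicit cohomological control. First, since $X \sim mH$ is smooth and hence irreducible, intersecting with a fiber $F$ of $f$ yields $X \cdot F = m$, so $X$ is not contained in any fiber; thus $\varphi$ is finite of degree $m$. Next, I push forward the standard resolution
\[
 0 \to \mathcal{O}_S(-mH) \to \mathcal{O}_S \to \mathcal{O}_X \to 0
\]
under $f$. Since $\mathcal{O}_S(-mH)$ restricts to $\mathcal{O}_{\mathbb{P}^1}(-m)$ on each fiber, for $m \geq 1$ one has $f_* \mathcal{O}_S(-mH) = 0$, while $f_* \mathcal{O}_S = \mathcal{O}_Y$ and $R^1 f_* \mathcal{O}_S = 0$ because the fibers are rational. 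The long exact sequence of pushforwards therefore collapses to
\[
 0 \to \mathcal{O}_Y \to \varphi_* \mathcal{O}_X \to R^1 f_* \mathcal{O}_S(-mH) \to 0 .
\]

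The key computation is the identification of the right-hand term. Since the relative dualizing sheaf of the $\mathbb{P}^1$-bundle $f$ is $\omega_{f} \cong \mathcal{O}_S(-2H) \otimes f^* \mathcal{O}_Y(E)$ (using $\det(\mathcal{O}_Y \oplus \mathcal{O}_Y(E)) = \mathcal{O}_Y(E)$), relative Serre duality gives
\[
 R^1 f_* \mathcal{O}_S(-mH) \cong \bigl( f_* \mathcal{O}_S((m-2)H) \otimes \mathcal{O}_Y(E) \bigr)^{\vee} .
\]
Combined with the standard identification $f_* \mathcal{O}_S(nH) \cong \mathrm{Sym}^n(\mathcal{O}_Y \oplus \mathcal{O}_Y(E)) \cong \bigoplus_{k=0}^{n} \mathcal{O}_Y(kE)$ for $n \geq 0$, this readily simplifies to
\[
 R^1 f_* \mathcal{O}_S(-mH) \cong \bigoplus_{j=1}^{m-1} \mathcal{O}_Y(-jE) .
\]

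Finally, to split the resulting short exact sequence as a direct sum of line bundles, I invoke the trace map $\mathrm{tr} : \varphi_* \mathcal{O}_X \to \mathcal{O}_Y$ of the finite flat covering $\varphi$ of degree $m$: since $\mathrm{tr}$ composed with the unit $\mathcal{O}_Y \hookrightarrow \varphi_* \mathcal{O}_X$ is multiplication by $m$, the map $\tfrac{1}{m}\mathrm{tr}$ provides a canonical retraction in characteristic zero, yielding
\[
 \varphi_* \mathcal{O}_X \cong \mathcal{O}_Y \oplus \mathcal{E}^{\vee} , \qquad \mathcal{E}^{\vee} \cong R^1 f_* \mathcal{O}_S(-mH) ,
\]
which is the asserted decomposition and, by definition of $\mathcal{E}^{\vee}$, gives the stated formula for the Tschirnhausen module. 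I expect the main obstacle to lie in carefully bookkeeping the $\det \mathcal{F}$-twist in the relative duality formula for the $R^1$-pushforward; this twist is precisely what produces the indexing $j = 1, \ldots, m-1$ of the summands $\mathcal{O}_Y(-jE)$, and mishandling it would yield a shifted family.
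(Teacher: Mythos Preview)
Your argument is correct, but it follows a genuinely different route from the paper's. The paper proves the decomposition by \emph{induction on $m$}: assuming $\varphi_*\mathcal{O}_{X_m}$ splits as claimed, it first deduces $R^1 f_*\mathcal{O}_S(-m)$ from the pushforward of $0\to\mathcal{O}_S(-m)\to\mathcal{O}_S\to\mathcal{O}_{X_m}\to 0$, and then pushes forward the \emph{twisted} sequence $0\to\mathcal{O}_S(-m)\to\mathcal{O}_S(1)\to\mathcal{O}_{X_{m+1}}(1)\to 0$, splitting the resulting extension via an $\Ext^1$-vanishing that uses positivity of $E$. You instead compute $R^1 f_*\mathcal{O}_S(-mH)$ directly in one stroke via relative Serre duality and the symmetric-power formula for $f_*\mathcal{O}_S(n)$, and split the extension with the trace map rather than an $\Ext$ calculation. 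The paper is aware of this shortcut: in a Remark immediately after its proof it records exactly the formula $R^1 f_*\mathcal{O}_S(k)\cong\mathrm{Sym}^{-k-2}(\mathcal{F}^\vee)\otimes\det(\mathcal{F}^\vee)$ you are effectively using, and notes it would give a shorter proof, but opts for the inductive argument ``for the sake of simplicity and self-containment.'' Your approach has the additional advantage that it never invokes any degree bound on $E$ (the paper's inductive step appeals to $\deg E\geq 2\gamma+1$ for the $\Ext^1$-vanishing, which is stronger than the non-speciality hypothesis in the theorem statement), so your proof is both shorter and works under weaker hypotheses.
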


\medskip

The proof of {\rm Theorem A} will be presented in Section~{\ref{Sec2}}. We note that although the decomposition of $\varphi_{\ast} \mathcal{O}_X$ in {\rm (\ref{ThmA_Decomp})} resembles that of a cyclic covering (see, for example, \cite[Remark 4.1.7, p.244]{Laz04}), the covering $\varphi : X \to Y$ is not, in general, cyclic when $m \geq 3$. We will return to this point in Section~{\ref{Sec4}}, following the proof of {\rm Theorem A$_m$}.

Our second result involving the Tschirnhausen module is about $X \sim mH + f^{\ast} q$, where $q \in Y$ is a point.

\begin{thmB}\label{thmBRef}
Let $Y$ be a smooth projective curve, and let $E$ be a non-special divisor on $Y$. Consider the ruled surface $f : S = \mathbb{P} (\mathcal{O}_Y \oplus \mathcal{O}_Y (E)) \to Y$, and let $H$ denote the tautological divisor on $S$. Let $m \geq 2$ be an integer, and let $q \in Y$ be a point. Suppose that $X \subset S$ is a smooth curve, linearly equivalent to $mH + f^{\ast} q$. Denote by $\varphi := f_{|_X}$ the restriction $f$ to $X$. Then:
\begin{enumerate}[label=(\roman*), leftmargin=*, font=\rmfamily]
 \item The morphism $\varphi$ is a finite cover of degree $m$, and its direct image satisfies
 \begin{equation*}\label{ThmB_Decomp}
  \varphi_{\ast} \mathcal{O}_X \cong \mathcal{O}_Y \oplus \mathcal{O}_Y (-E-q)  \oplus \cdots \oplus \mathcal{O}_Y (-(m-1)E-q) .
 \end{equation*}
In particular, the Tschirnhausen module $\mathcal{E}^{\vee}$ associated to the covering $\varphi$ is given by:
\[
 \mathcal{E}^{\vee} \cong \mathcal{O}_Y (-E-q)  \oplus \cdots \oplus \mathcal{O}_Y (-(m-1)E-q) .
\]

 \item The curve $X$ passes through the point $q_0 := f^{\ast} q \cap Y_0 \subset S$, where $Y_0$ is the section of minimal self-intersection. Moreover, for the sheaf $\mathcal{O}_X (q_0)$, its direct image under $\varphi$ is
 \begin{equation*}\label{ThmB_Decomp_q}
 \varphi_{\ast} \mathcal{O}_X (q_0) \cong  \mathcal{O}_Y \oplus \mathcal{O}_Y (-E) \oplus \mathcal{O}_Y (-2E-q)  \oplus \cdots \oplus \mathcal{O}_Y (-(m-1)E-q) \, .
 \end{equation*}
\end{enumerate}
\end{thmB}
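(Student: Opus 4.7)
My plan is to derive both parts of the theorem by pushing forward suitable short exact sequences on $S$ along $f$, using the standard cohomology of line bundles on the $\mathbb{P}^1$-bundle. For (i), I will apply $f_{\ast}$ to the resolution $0 \to \mathcal{O}_S(-X) \to \mathcal{O}_S \to \mathcal{O}_X \to 0$. The two inputs needed are $f_{\ast}\mathcal{O}_S(kH) = S^k\mathcal{E}$ for $k \geq 0$ (and zero for $k < 0$) together with $R^1 f_{\ast}\mathcal{O}_S(kH) = \bigoplus_{j=1}^{-k-1}\mathcal{O}_Y(-jE)$ for $k \leq -2$, obtained from relative Serre duality. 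Combined with $\mathcal{O}_S(-X) = \mathcal{O}_S(-mH) \otimes f^{\ast}\mathcal{O}_Y(-q)$ and the projection formula, this produces
\[
0 \to \mathcal{O}_Y \to \varphi_{\ast}\mathcal{O}_X \to \bigoplus_{j=1}^{m-1}\mathcal{O}_Y(-jE-q) \to 0,
\]
which splits canonically via $\tfrac{1}{m}\operatorname{tr}$, since the composition $\mathcal{O}_Y \hookrightarrow \varphi_{\ast}\mathcal{O}_X \xrightarrow{\operatorname{tr}} \mathcal{O}_Y$ is multiplication by $m$. The Tschirnhausen module is then identified as the second summand.

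\textbf{Setup for (ii).} I will first compute $Y_0 \cdot X = (H - f^{\ast}E)(mH + f^{\ast}q) = 1$, so $X$ meets $Y_0$ in a single reduced point. Writing a defining section of $X$ as $\sigma = \sum_{i=0}^{m} a_i u^{m-i} v^i$ with $a_i \in H^0(\mathcal{O}_Y(iE+q))$ in the relative coordinates compatible with $\mathcal{E} = \mathcal{O}_Y \oplus \mathcal{O}_Y(E)$, restriction to $Y_0 = \{v = 0\}$ extracts $a_0 \in H^0(\mathcal{O}_Y(q))$. Since $X$ is smooth and irreducible, $a_0 \not\equiv 0$, and (for $\gamma \geq 1$) the unique section of $\mathcal{O}_Y(q)$ up to scale has divisor $q$, so $X \cap Y_0 = \{q_0\}$. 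Transversality is automatic from the intersection number, giving $\mathcal{O}_S(Y_0)|_X \cong \mathcal{O}_X(q_0)$. Combining with $Y_0 \sim H - f^{\ast}E$ and the projection formula reduces the second claim of (ii) to computing $\varphi_{\ast}(\mathcal{O}_S(H)|_X)$, via
\[
\varphi_{\ast}\mathcal{O}_X(q_0) \cong \varphi_{\ast}\bigl(\mathcal{O}_S(H)|_X\bigr) \otimes \mathcal{O}_Y(-E).
\]
Pushing forward $0 \to \mathcal{O}_S(H-X) \to \mathcal{O}_S(H) \to \mathcal{O}_S(H)|_X \to 0$ with $H - X = -(m-1)H - f^{\ast}q$ then yields
\[
0 \to \mathcal{O}_Y \oplus \mathcal{O}_Y(E) \to \varphi_{\ast}\bigl(\mathcal{O}_S(H)|_X\bigr) \to \bigoplus_{j=1}^{m-2}\mathcal{O}_Y(-jE-q) \to 0.
\]

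\textbf{Main obstacle.} The crux will be splitting this last sequence, since there is no natural trace map as in (i). I will reduce the splitting to the vanishing of $\operatorname{Ext}^1\bigl(\mathcal{O}_Y(-jE-q),\, \mathcal{O}_Y \oplus \mathcal{O}_Y(E)\bigr) = H^1(\mathcal{O}_Y(jE+q)) \oplus H^1(\mathcal{O}_Y((j+1)E+q))$ for each $j = 1, \ldots, m-2$, which amounts to showing that $kE + q$ is non-special for $k = 1, \ldots, m-1$. For $k = 1$, this follows from $h^0(K_Y - E - q) \leq h^0(K_Y - E) = 0$, using non-specialness of $E$ and effectivity of $q$. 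For $k \geq 2$, non-specialness of $E$ forces $e \geq \gamma - 1$, so $\deg(K_Y - kE - q) = 2\gamma - 3 - ke \leq -1$, whence $h^0 = 0$ automatically. Granting the splitting, I obtain $\varphi_{\ast}(\mathcal{O}_S(H)|_X) \cong \mathcal{O}_Y \oplus \mathcal{O}_Y(E) \oplus \bigoplus_{j=1}^{m-2}\mathcal{O}_Y(-jE-q)$; twisting by $\mathcal{O}_Y(-E)$ produces the claimed decomposition of $\varphi_{\ast}\mathcal{O}_X(q_0)$.
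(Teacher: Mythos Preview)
Your proof is correct and follows essentially the same strategy as the paper's: push forward the ideal sequence of $X$ (respectively, its twist by $Y_0 \sim H - f^{\ast}E$) along $f$, identify the $R^1 f_{\ast}$ term via the formula $R^1 f_{\ast}\mathcal{O}_S(-k) \cong \bigoplus_{j=1}^{k-1}\mathcal{O}_Y(-jE)$, and split the resulting short exact sequence on $Y$. The only minor differences are that the paper splits the sequence in (i) via $\Ext^1$ vanishing rather than the trace map, and in (ii) twists directly by $Y_0$ (giving your sequence tensored by $\mathcal{O}_Y(-E)$); your verification that $kE+q$ is non-special for all $k \geq 1$ using only the non-specialness of $E$ is in fact more careful than the paper's, which tacitly invokes the running hypothesis $e > 2\gamma$ from the ambient section rather than the bare hypothesis of Theorem~B.
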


\medskip

The proof of \textnormal{Theorem~B} will be given  in Section~\ref{Sec2}, following the proof of \textnormal{Theorem~A}.

Our interest in identifying the structure of the Tschirnhausen module stems from our previous works on the Hilbert scheme of curves - see \cite{CIK21}, \cite{CIK24a}, and \cite{CIK24b} - where we employed decompositions like those in \textnormal{Theorem~A} and \textnormal{Theorem~B} for the cases $m = 2$ and $m = 3$. In the case $m = 2$, the decomposition follows immediately from the fact that  $\mathcal{E}^{\vee}$ can be identified with the kernel of the trace map $\varphi_{\ast} \mathcal{O}_X \to \mathcal{O}_Y$; see, for example, \cite{Mir85} or \cite{CE96}. The decomposition we established for $m = 3$ in \cite{CIK24b}, further suggested that a similar structure might exist more generally for coverings of curves on decomposable ruled surfaces. In turn, \textnormal{Theorem~A}  and \textnormal{Theorem~B} enable us to generalize the results we previously obtained in \cite{CIK21} and \cite{CIK24b} concerning components of the Hilbert scheme of curves. To state these results, let $\mathcal{I}_{d,g,r}$ denote the union of irreducible components of the Hilbert scheme whose general points correspond to smooth, irreducible, non-degenerate curves of degree $d$ and genus $g$ in $\mathbb{P}^r$.

The following statement generalizes \cite[Theorem~A]{CIK21} to the case of $m:1$ coverings $\varphi : X \to Y$ of curves on cones, where $X$ is the intersection of the cone with a hypersurface of degree $m \geq 2$.

\begin{thmAm}
Assume that $m \geq 2$, $\gamma \geq 10$, and $e \geq 2\gamma + 1$ are integers. Define
\[
  d:= me , \quad g := \binom{m}{2}e + m\gamma + 1-m , \quad \mbox{ and } \quad R:= e - \gamma + 1 \, .
\]
\begin{enumerate}[label=(\roman*), leftmargin=*, font=\rmfamily]
 \item The Hilbert scheme $\mathcal{I}_{d, g, R}$ contains a \emph{generically smooth} irreducible component $\mathcal{D}_{m, R}$ of dimension
\[
  \dim \mathcal{D}_{m, R} = g + (m-1)(e - 2(\gamma-1)) - 1 + 3(\gamma-1) + R^2 + 2R \, .
\]
 A general point $[X_R] \in \mathcal{D}_{m, R}$ corresponds to a linearly normal curve that arises as the intersection of a general hypersurface of degree $m$ in $\mathbb{P}^{R}$ with a cone over a curve $Y \subset \mathbb{P}^{R-1}$ of degree $e$ and genus $\gamma$.

 \item If $r$ is an integer such that
\[
 \max \left\lbrace \gamma, \frac{2(e+2(\gamma-1))}{\gamma} \right\rbrace \leq r < R ,
\]
then the Hilbert scheme $\mathcal{I}_{d, g, r}$ contains a \emph{generically smooth} component $\mathcal{D}_{m, r}$ for which
\[
  \dim \mathcal{D}_{m, r} = g + (m-1)(e - 2(\gamma-1)) - 1 + 3(\gamma-1) + R r + R + r \, .
\]
 Moreover, a general point $[X_r] \in \mathcal{D}_{m, r}$ arises as a general projection of a curve $X_R$ as described in~\textnormal{(i)}.
\end{enumerate}
\end{thmAm}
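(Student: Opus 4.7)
The strategy is to exhibit an irreducible family $\mathcal{F}$ of embedded curves realizing the stated construction, compute its dimension, and match it with the Zariski tangent space $h^0(N_{X_R/\mathbb{P}^R})$ at a general member; part~(ii) will then follow by combining (i) with a standard projection analysis. For (i), I parametrize $\mathcal{F}$ by quadruples $(Y,E,X,[\iota])$ where $Y$ varies in $M_\gamma$, $E\in\Pic^e(Y)$ is non-special, $X\in|mH|$ on $S=\mathbb{P}(\mathcal{O}_Y\oplus\mathcal{O}_Y(E))$, and $[\iota]\in PGL(R+1)$. Since $X\cdot Y_0=mH\cdot Y_0=0$, the curve $X$ is disjoint from the minimal section $Y_0$, which is precisely the locus contracted by $|H|$ to the vertex of the cone $\bar{S}\subset\mathbb{P}^R$ over the linearly normal embedding $Y\subset\mathbb{P}^{R-1}$; hence $|H|$ identifies $X$ with the transverse intersection of $\bar{S}$ with a degree-$m$ hypersurface. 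Counting $3(\gamma-1)$ for the moduli of $Y$, $\gamma$ for $E$, $\dim|mH|=\binom{m+1}{2}e-m(\gamma-1)$ for $X$, and $R^2+2R$ for $PGL(R+1)$, and then subtracting the $(R+1)$-dimensional stabilizer from $\Aut_Y(S)$ (the upper-triangular bundle automorphisms of $\mathcal{O}_Y\oplus\mathcal{O}_Y(E)$ modulo scalars, of dimension $1+h^0(\mathcal{O}_Y(E))=R+1$), the arithmetic yields precisely the claimed $\dim\mathcal{D}_{m,R}$.

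For the tangent-space computation, the key observation from $X\cap Y_0=\emptyset$ and $H\sim Y_0+f^{\ast}E$ is that $\mathcal{O}_X(1)=\mathcal{O}_S(H)|_X=\varphi^{\ast}\mathcal{O}_Y(E)$. Combined with Theorem~A and the projection formula, this produces
\[
 \varphi_{\ast}\mathcal{O}_X(k)\;\cong\;\bigoplus_{i=0}^{m-1}\mathcal{O}_Y\bigl((k-i)E\bigr)\qquad\text{for every }k\in\mathbb{Z},
\]
so $h^0(\mathcal{O}_X(1))=(e-\gamma+1)+1=R+1$ confirms that $X$ is linearly normal. I then combine the Euler sequence $0\to\mathcal{O}_X\to\mathcal{O}_X(1)^{\oplus(R+1)}\to T_{\mathbb{P}^R}|_X\to 0$ with the normal bundle sequence $0\to T_X\to T_{\mathbb{P}^R}|_X\to N_{X/\mathbb{P}^R}\to 0$. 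Pushing forward to $Y$, every cohomology group reduces to a computation on $Y$ via the displayed decomposition, and the hypothesis $e\geq 2\gamma+1$ ensures that $jE$ is non-special for all $j\geq 1$, killing the intervening $H^1$'s. A diagram chase then shows $h^1(N_{X/\mathbb{P}^R})=0$ and that $h^0(N_{X/\mathbb{P}^R})=\dim\mathcal{F}$, so $\mathcal{F}$ sweeps out a generically smooth component $\mathcal{D}_{m,R}$ of the expected dimension.

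For (ii), I project a general $[X_R]\in\mathcal{D}_{m,R}$ from a general $(R-r-1)$-dimensional linear subspace $\Lambda\subset\mathbb{P}^R$ disjoint from $X_R$. The numerical bounds $r\geq\gamma$ and $r\geq 2(e+2(\gamma-1))/\gamma$ ensure, respectively, that a general $\Lambda$ produces a smooth non-degenerate image $X_r\subset\mathbb{P}^r$, and that $h^1(N_{X_r/\mathbb{P}^r})=0$ via the standard projection sequences relating $N_{X_r/\mathbb{P}^r}$ to $N_{X_R/\mathbb{P}^R}$ combined with the vanishing established in the previous paragraph. The resulting family in $\mathcal{I}_{d,g,r}$ has dimension $\dim\mathcal{D}_{m,R}-(R^2+2R)+(R-r)(r+1)+(r^2+2r)$, which rearranges to the claimed $g+(m-1)(e-2(\gamma-1))-1+3(\gamma-1)+Rr+R+r$, and equality with $h^0(N_{X_r/\mathbb{P}^r})$ produces the generically smooth component $\mathcal{D}_{m,r}$. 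The principal obstacle is the tangent-space calculation in the second paragraph: one must track all contributions from the summands of $\bigoplus_{i=0}^{m-1}\mathcal{O}_Y((k-i)E)$ across the Euler and normal bundle sequences and verify each intervening $H^1$ vanishes under $e\geq 2\gamma+1$; pinning down the precise numerical thresholds on $r$ in (ii) is a secondary challenge of the same flavor.
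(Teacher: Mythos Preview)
Your dimension count for the family $\mathcal{F}$ is correct and matches the paper's. The fatal gap is in the tangent-space step: your claim that $h^1(N_{X_R/\mathbb{P}^R})=0$ is false. The component $\mathcal{D}_{m,R}$ is \emph{superabundant}: already for $m=2$, $\gamma=10$, $e=21$ one has $\dim\mathcal{D}_{m,R}=237$ while the expected dimension $\lambda_{d,g,R}=(R+1)d-(R-3)(g-1)=195$, so $h^1(N_{X_R/\mathbb{P}^R})=42$. Consequently your assertion that ``the intervening $H^1$'s'' in the Euler and normal-bundle sequences vanish cannot hold; indeed $\varphi_\ast\mathcal{O}_X(1)\cong\bigoplus_{i=0}^{m-1}\mathcal{O}_Y((1-i)E)$ contains the summands $\mathcal{O}_Y,\mathcal{O}_Y(-E),\ldots$, each with nonzero $H^1$, and these obstructions propagate. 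A direct Euler-sequence computation of $h^0(N_{X_R/\mathbb{P}^R})$ would require tracking nontrivial coboundary maps, which your sketch does not do.

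The paper avoids this by using an entirely different normal-bundle sequence, namely the one coming from projection from the vertex of the cone:
\[
0\to\mathcal{O}_X(1)\otimes\mathcal{O}_X(R_\varphi)\to N_{X/\mathbb{P}^R}\to\varphi^\ast N_{Y_R/\mathbb{P}^{R-1}}\to 0,
\]
where $Y_R\subset\mathbb{P}^{R-1}$ is the base curve of the cone. The subbundle is nonspecial, so the sequence is exact on $H^0$, and $h^0(\varphi^\ast N_{Y_R/\mathbb{P}^{R-1}})=\sum_{k=0}^{m-1}h^0(N_{Y_R/\mathbb{P}^{R-1}}(-k))$ by Theorem~A and the projection formula. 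The crucial extra input---completely absent from your proposal---is the computation of $h^0(N_{Y_R/\mathbb{P}^{R-1}}(-k))$ via the \emph{Gaussian map} $\Phi_{\omega_Y,\mathcal{O}_Y(E)}$: its surjectivity (which is exactly what forces $\gamma\geq 10$) gives $h^0(N_{Y_R}(-1))=R$ and $h^0(N_{Y_R}(-k))=0$ for $k\geq 2$. Your approach never invokes the Gaussian map, which should already signal trouble, since otherwise the hypothesis $\gamma\geq 10$ would be idle. Similarly, in part~(ii) the bound $r\geq 2(e+2(\gamma-1))/\gamma$ is precisely the threshold for surjectivity of the \emph{restricted} Gaussian map $\Phi_{\omega_Y,V}$ with $\dim V=r$, not a smoothness-of-projection condition; your proposed projection-sequence argument with $h^1=0$ fails for the same reason as in~(i).
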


\medskip

The structure of the proof of \textnormal{Theorem~A$_m$}  follows the approach of \cite[Theorem~A]{CIK21} and is presented in Section~\ref{Sec4}. We note that the case of a complete embedding, that is, $r = R$, was obtained by Flamini and Suppino in \cite{FS23}, as it was derived by them from \cite[Theorem~A]{CIK21} via induction on $m$, using degeneration of an $(m+1)$-secant smooth curve on the cone to an $m$-secant smooth curve and a hyperplane section of the cone. However, addressing the case of incomplete embedding for $m \geq 3$ requires knowledge of the decomposition of $\varphi_{\ast} \mathcal{O}_X$,  as provided by \textnormal{Theorem~A} in this paper.

The next statement generalizes \cite[The Main Theorem]{CIK24b} to the case of $m:1$ coverings $\varphi : X \to Y$, where $X$ lies on a cone and passes through its vertex, being algebraically equivalent to the intersection of the cone with a hypersurface of degree $m \geq 3$ and a line from its ruling.

\begin{thmBm}
Assume that $e$, $m$, and $\gamma$ are integers satisfying $e \geq 4 \gamma +5$, $m \geq 3$, and $\gamma \geq 3$. Define
\[
  d:= me + 1 , \qquad  g := \binom{m}{2} e + m \gamma , \quad \mbox{ and } \quad R:= e - \gamma + 1 .
\]
Then the Hilbert scheme $\mathcal{I}_{d, g, R}$ contains a \emph{non-reduced}, irreducible component $\mathcal{D}^{\prime}_{m,R}$ such that:
\begin{enumerate}[label=(\roman*), leftmargin=*, font=\rmfamily]
  \item $\dim \mathcal{D}^{\prime}_{m,R} = g + (m-1)(e - 2(\gamma-1)) - 1 + 3\gamma-2 + R^2 + 2R$;

  \item at a general point $[X] \in \mathcal{D}^{\prime}_{m,R}$, the dimension of the Zariski tangent space satisfies
 \[
  \dim T_{[X]} \mathcal{D}^{\prime}_{m,R} = \dim \mathcal{D}^{\prime}_{m,R} + 1 \, ;
 \]

  \item a general point $[X] \in \mathcal{D}^{\prime}_{m,R}$ corresponds to a curve $X \subset \mathbb{P}^R$ lying on a cone $F$ over a curve $Y \subset \mathbb{P}^{R-1}$ of degree $e$ and genus $\gamma$, satisfying the following properties:
  \begin{enumerate}[label=\theenumi.\arabic*.]
   \item $X \subset \mathbb{P}^{R}$ is linearly normal and passes through the vertex $P$ of the cone $F$;

   \item there exists a line $\ell$ from the ruling of $F$ that is tangent to $X$ at $P$, with intersection multiplicity two;

   \item the projection from $P$ onto the hyperplane containing $Y$ induces a morphism $\varphi : X \to Y$ of degree $m$ ;

   \item the ramification divisor $R_{\varphi}$ is linearly equivalent to the divisor cut on $X$ by a hypersurface of degree $m-1$, together with the points $Q_1 + Q_2 + \cdots + Q_{m-1}$, where $Q_1, \ldots , Q_{m-1}$ are the remaining $m-1$ points (besides $P$) where the tangent line $\ell$ intersects $X$.
  \end{enumerate}
\end{enumerate}
\end{thmBm}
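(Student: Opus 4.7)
The plan is to adapt the strategy of the proof of the Main Theorem in \cite{CIK24b}, where the same result was established in the case $m=3$. Theorem~B furnishes, for arbitrary $m \geq 3$, precisely the decomposition of $\varphi_*\mathcal{O}_X$ and $\varphi_*\mathcal{O}_X(q_0)$ that was the key technical input in the cubic case; once this decomposition is available, the earlier arguments extend with only numerical adjustments.

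First I would construct a parameter variety $\mathcal{U}$ dominating the candidate component $\mathcal{D}'_{m,R}$. A point of $\mathcal{U}$ encodes: a smooth pointed curve $(Y,q)$ of genus $\gamma$; a non-special line bundle $E$ of degree $e$ on $Y$, so that $|E|$ embeds $Y$ linearly normally in $\mathbb{P}^{R-1}$; a smooth member $\tilde X \in |mH + f^{\ast}q|$ on $S = \mathbb{P}(\mathcal{O}_Y \oplus \mathcal{O}_Y(E))$; and a projective frame realizing the associated cone $F$ as a subvariety of $\mathbb{P}^R$. The dimensions contributed by the pointed-curve moduli ($3\gamma-2$) and by $PGL(R+1)$ ($R^2+2R$) are immediate, while the linear system $|mH+f^{\ast}q|$ is analyzed via $f_{\ast}\mathcal{O}_S(mH+f^{\ast}q) = \bigoplus_{i=0}^m \mathcal{O}_Y(iE+q)$, whose summands are non-special under the hypothesis $e \geq 4\gamma+5$. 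Matching the total against the claimed value of $\dim \mathcal{D}'_{m,R}$ is then a bookkeeping exercise. Irreducibility of $\mathcal{U}$, and hence of $\mathcal{D}'_{m,R}$, follows from that of each constituent factor.

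The technical heart is the tangent-space computation of part~(ii). Write $\pi: S \to \mathbb{P}^R$ for the contraction of $Y_0$ to the vertex $P \in F$; since $\tilde X \cdot Y_0 = 1$, the restriction $\pi|_{\tilde X}$ is a closed embedding onto $X$, smooth at $P$. I would bound $h^0(N_{X/\mathbb{P}^R})$ by combining the normal bundle sequence
\[
 0 \to N_{\tilde X/S} \to N_{X/\mathbb{P}^R} \to \mathcal{N}_F \to 0 ,
\]
where $\mathcal{N}_F$ accounts for deformations of $X$ off $F$ (suitably twisted to reflect the contraction at $P$), with the Euler sequence restricted to $X$. The resulting cohomology groups reduce, via the projection formula, to sections of sheaves on $Y$ of the shape $\varphi_{\ast}\mathcal{O}_{\tilde X} \otimes \mathcal{O}_Y(nE+aq)$ and $\varphi_{\ast}\mathcal{O}_{\tilde X}(q_0) \otimes \mathcal{O}_Y(nE+aq)$. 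Theorem~B presents each of these as a direct sum of $m$ line bundles whose degrees, thanks to $e \geq 4\gamma+5$, lie in the non-special range, so Riemann--Roch applies term by term. A careful tally should give
\[
 h^0(N_{X/\mathbb{P}^R}) = \dim \mathcal{D}'_{m,R} + 1 ,
\]
whence both the tangent-space claim of (ii) and the non-reducedness. Claims (iii.1) and (iii.3) are then immediate (linear normality via $f_{\ast}\mathcal{O}_S(H) = \mathcal{O}_Y \oplus \mathcal{O}_Y(E)$ and genericity; the projection via $\varphi$ by construction), and (iii.2), (iii.4) follow by analyzing $\tilde X \cdot f^{\ast}q$ and applying the Hurwitz formula to $\varphi$.

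The main obstacle will be organizing the tally in the previous paragraph cleanly for general $m$. The decomposition of Theorem~B(ii) now involves $m-1$ summands of varying degrees, and one must be precise in identifying their contribution to $h^0$ and in tracing the geometric origin of the extra $+1$ tangent direction. I expect this direction to correspond to the infinitesimal sliding of the residual intersection points $Q_1,\ldots,Q_{m-1}$ along the tangent line $\ell$, which fails to integrate to a genuine deformation within $\mathcal{D}'_{m,R}$ because the condition that $X$ pass through the vertex pins it down.
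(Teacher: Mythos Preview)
Your outline has the right high-level architecture (construct the family, compute $h^0(N_{X/\mathbb{P}^R})$, compare), and you correctly identify Theorem~B as the new input replacing the ad hoc $m=3$ computation. However, there are two substantial issues.

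\textbf{The tangent-space computation is organised differently and your version is underspecified.} The sequence you write,
\[
 0 \to N_{\tilde X/S} \to N_{X/\mathbb{P}^R} \to \mathcal{N}_F \to 0,
\]
is not an exact sequence of well-defined sheaves: $S$ is not embedded in $\mathbb{P}^R$ (only its image $F$ is), and $F$ is singular at the vertex, so ``$\mathcal{N}_F$'' has no evident meaning, and your parenthetical about a suitable twist does not repair this. The paper instead projects from the vertex $P$ and uses the sequence of Proposition~\ref{Prop_NormBundSeq}(b),
\[
 0 \to \mathcal{O}_X(1)\otimes\mathcal{O}_X(R_\varphi+2p) \to N_{X/\mathbb{P}^R} \to \varphi^{\ast}N_{Y_R/\mathbb{P}^{R-1}}\otimes\mathcal{O}_X(p) \to 0,
\]
so that the right-hand term is computed via the projection formula and Theorem~B(ii). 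Crucially, this reduces everything to $h^0(N_{Y_R/\mathbb{P}^{R-1}}(-k))$ for $k=0,1,\ldots,m-1$, which are controlled by the Gaussian-map results of Corollary~\ref{Sec3_PropNormGaussMap_Coro}; you never invoke these, and without them there is no mechanism forcing the higher twists to vanish and the count to close up to exactly $\dim\mathcal{D}'_{m,R}+1$.

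\textbf{More seriously, you omit the argument that $\mathcal{D}'_{m,R}$ is a component at all.} Showing $h^0(N_{X/\mathbb{P}^R})=\dim\mathcal{D}'_{m,R}+1$ does not by itself yield non-reducedness: a priori $\mathcal{D}'_{m,R}$ could sit inside a larger, generically smooth component of dimension $\dim\mathcal{D}'_{m,R}+1$. The paper devotes a separate Step~III to ruling this out, via a deformation lemma (Lemma~\ref{Sec5DeformLemma}) showing that any flat deformation of $X$ remains on a cone of the same type. That argument uses the projective normality of $Y_R$ (hence $e\ge 4\gamma+5$, not merely $e\ge 2\gamma+1$) together with the explicit shape of a presentation of $I(X)$ coming from \cite{CG99}, so that the degree-two part of the ideal survives deformation and continues to cut out a cone. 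Your proposal contains no analogue of this step, and your closing sentence about the extra tangent direction ``failing to integrate'' presupposes exactly what needs to be proved.
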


\medskip

The proof of \textnormal{Theorem~B$_m$} is similar to the proof of \cite[The Main Theorem]{CIK24b}. We present it in \textnormal{Section~\ref{Sec4}}.

\subsection*{Conventions and notation}

Throughout the paper, we work over the field $\mathbb{C}$. By a \emph{curve}, we mean a smooth, integral projective curve. Given a line bundle $L$ on a smooth projective variety $X$, or a divisor $\Delta$ associated with $L$, we denote the complete linear series by $|L| := \mathbb P\left(H^0(X,L)\right)$. For a line bundle $L$ and a divisor $\Delta$ on $X$, we sometimes abbreviate the tensor product $L \otimes \mathcal{O}_X (\Delta)$ by simply writing $L(\Delta)$. The symbol $\sim$ denotes linear equivalence of divisors.
For a vector bundle $\mathcal{V}$ on a variety $Y$, we follow Hartshorne's convention for the projectivization $\mathbb{P} (\mathcal{V})$. If $f : \mathbb{P} (\mathcal{V}) \to Y$ is the natural projection and $D$ is a divisor on $Y$, we denote its pullback by $f^{\ast} D$. When $X$ varies in a family, we denote by  $[X]$ the corresponding point in the Hilbert scheme parametrizing the family. For any further definitions and properties not explicitly introduced in the paper, we refer the reader to \cite{Hart77}.

\medskip

\section{Decomposition of the Tschirnhausen module for curves on decomposable ruled surfaces}\label{Sec2}

\subsection{Preliminaries on Tschirnhausen modules and decomposable ruled surfaces}\label{Sec2.1}

The theory of $m:1$ covers in algebraic geometry has been developed by Miranda in \cite{Mir85} in the case $m = 3$, and by Casnati and Ekedahl in general, see \cite{CE96}. Partial results, examples and applications have been obtain by many more. Here we list a few basic facts about coverings of curves following \cite{CE96} and \cite{Hart77}.

Let $\varphi : X \to Y$ be an $m:1$ cover with $m \geq 2$, where $X$ and $Y$ are smooth integral curves of genus $g$ and $\gamma$, respectively. This means that $\varphi$ is finite, flat, and surjective. The covering induces a short exact sequence of vector bundles on $Y$
\begin{equation*}\label{Sec2_SES_Tschirn_Mod}
 0 \to \mathcal{O}_Y \xrightarrow{\varphi^{\sharp}} \varphi_{\ast} \mathcal{O}_X \to \mathcal{E}^{\vee} \to 0 \, ,
\end{equation*}
where $\mathcal{E}^{\vee}$ is the so-called \emph{Tschirnhausen module}. Locally, any section  $x \in \varphi_{\ast} \mathcal{O}_X$ defines an $\mathcal{O}_Y$-linear endomorphism of $\varphi_{\ast} \mathcal{O}_X$ via multiplication. Since $\varphi_{\ast} \mathcal{O}_X$ is a locally free $\mathcal{O}_Y$ module of rank $m$, this multiplication is represented by an $m \times m$ matrix $M_x$. Define the trace map $\tau_{m} : \varphi_{\ast} \mathcal{O}_X \to \mathcal{O}_Y$ locally by $\tau_{m}  (x) = \frac{1}{m}trace (M_x)$. This map extends globally and provides a splitting of the sequence {\rm (\ref{Sec2_SES_Tschirn_Mod})}, as $\mathcal{E}^{\vee}$ consists of trace-zero elements in $\varphi_{\ast} \mathcal{O}_X$. Therefore, $\mathcal{E}^{\vee}$ is a vector bundle of rank $(m-1)$ on $Y$. For further details, see \cite{HM99}.

Furthermore, if $X$ and $Y$ are smooth curves, it follows from \cite[Ex. IV.2.6, p. 306]{Hart77} that
\[
 (\det \varphi_{\ast} \mathcal{O}_X )^2 \cong \mathcal{O}_Y (-B) ,
\]
where $B$ denotes the branch divisor of the covering. By the Riemann-Hurwitz formula, we have $\deg B = 2(g - 1) - 2m (\gamma - 1)$, which implies that
\[
 \deg \det (\mathcal{E}^{\vee}) = \deg \det (\varphi_{\ast} \mathcal{O}_X) = g - 1 - m (\gamma - 1) \, .
\]
In particular, when $m = 2$, the Tschirnhausen bundle $\mathcal{E}^{\vee}$ is a line bundle  of degree $e := g - 2\gamma + 1$, or equivalently, $\mathcal{E} \cong \mathcal{O}_Y (-E)$ for some divisor $E$ of degree $e$ on $Y$. In our previous papers \cite{CIK21} and \cite{CIK24a}, we constructed degree-two coverings in which $E$ is a non-special, very ample divisor on $Y$; for instance, $e \geq 2\gamma +1$ is sufficient for very ampleness. These coverings were realized as curves on the ruled surface $\mathbb{P} (\mathcal{O}_Y \oplus \mathcal{O}_Y (E))$.

In this section, we further explore the construction of degree-$m$ coverings using decomposable ruled surfaces. We begin by recalling a few basic facts about them, as presented in \cite[Ch.V.2]{Hart77} and \cite{GP2005}.

Let $Y$ be a smooth, irreducible projective curve of genus $\gamma$, and $E$ be a divisor on $Y$ of degree $\deg E = e > 2\gamma$. Consider the decomposable ruled surface
\begin{equation*}\label{Sec2_f-S-Y}
 f : S = \mathbb{P} (\mathcal{O}_Y  \oplus \mathcal{O}_Y (E)) \to Y \, .
\end{equation*}
with natural projection $f : S \to Y$. The surface $S$ has a section $\sigma_0$ corresponding to the exact sequence
\[
 \sigma_0 \, : \qquad 0 \to \mathcal{O}_Y \to \mathcal{O}_Y(-E) \oplus \mathcal{O}_Y \to \mathcal{O}_Y(-E) \to 0 \, .
\]
Let $Y_0 := \sigma_0 (Y)$. Then $Y_0$ is the \emph{section of minimum self-intersection}, as $Y^2_0 = -e$.
The surface also contains another section $\sigma_1$, associated to the exact sequence
\[
 \sigma_1 \, : \qquad 0 \to \mathcal{O}_Y \to \mathcal{O}_Y \oplus \mathcal{O}_Y(E) \to \mathcal{O}_Y(E) \to 0 \, ,
\]
which corresponds to the surjection
$
 \mathcal{O}_Y(-E) \oplus \mathcal{O}_Y \to \mathcal{O}_Y \to 0
$.
Let $Y_1 := \sigma_1 (Y)$. Then $Y_1 \sim Y_0 + f^{\ast} E$, and the intersection numbers are
\[
 Y_0 \cdot Y_1 = 0 \, , \quad \mbox{ and } \quad
 Y^2_1 = e \, .
\]
For the direct image sheaf of $\mathcal{O}_S (Y_0)$ under $f$, we have
\begin{equation*}\label{Sec2_f*OS(Y0)}
 f_{\ast} \mathcal{O}_S (Y_0) \cong \mathcal{O}_Y (-E) \oplus \mathcal{O}_Y .
\end{equation*}
The Picard group of $S$ satisfies:
\[
 \Pic (S) \cong \mathbb{Z} [H] \oplus f^{\ast} (\Pic (Y)) ,
\]
where $H$ denotes the tautological divisor on $S$, i.e., $\mathcal{O}_S (1) \cong \mathcal{O}_S (H)$.
For any divisor $Z$ on $S$ and any divisor $D$ on $Y$, we have:
\begin{equation*}\label{Sec2_f*OS(Z+f*D)}
 f_{\ast} \mathcal{O}_S (Z+f^{\ast}D) \cong f_{\ast} \mathcal{O}_S (Z) \otimes \mathcal{O}_Y (D) .
\end{equation*}
In particular, since
$
  f_{\ast} \mathcal{O}_S (H) \cong \mathcal{O}_Y \oplus \mathcal{O}_Y (E)
$
 and
$
f_{\ast} \mathcal{O}_S (Y_0 + f^{\ast} E) \cong (\mathcal{O}_Y (-E)\oplus \mathcal{O}_Y) \otimes \mathcal{O}_Y (E) \cong \mathcal{O}_Y \oplus \mathcal{O}_Y (E)
$,
it follows that the tautological divisor $H$ satisfies:
\begin{equation*}\label{Sec2_H}
 H \sim Y_0 + f^{\ast} E .
\end{equation*}

Before proceeding to the proofs of \textnormal{Theorem~A} and \textnormal{Theorem~B}, we use the present setting to establish a few additional results that will be used in Sections~\ref{Sec4} and~\ref{Sec5}.
We will make use of the dimension of two linear series on $S$, as described in the following lemma.

\begin{lemma}\label{Sec2_dim_LinSer_on_S}
Let $q \in Y$ be a point, and let $m \geq 1$ be an integer. Then
\begin{enumerate}[label=(\roman*), leftmargin=*, font=\rmfamily]
 \item $\dim |\mathcal{O}_S (mH)| = \binom{m+1}{2}e - m\gamma + m$;

 \item $\dim |\mathcal{O}_S (mH + f^{\ast} q)| = \binom{m+1}{2}e - m\gamma + 2m$.
\end{enumerate}
\end{lemma}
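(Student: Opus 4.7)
The plan is to reduce the computation on the surface $S$ to cohomological computations on the base curve $Y$, using the fact that $f : S \to Y$ is a $\mathbb{P}^1$-bundle whose associated sheaf is $\mathcal{O}_Y \oplus \mathcal{O}_Y(E)$.

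First, I would apply the standard formula for direct images under a projective bundle. Since $S = \mathbb{P}(\mathcal{O}_Y \oplus \mathcal{O}_Y(E))$, for $m \geq 0$ one has
\[
 f_{\ast} \mathcal{O}_S(mH) \cong \operatorname{Sym}^m(\mathcal{O}_Y \oplus \mathcal{O}_Y(E)) \cong \bigoplus_{i=0}^{m} \mathcal{O}_Y(iE) ,
\]
together with $R^1 f_{\ast} \mathcal{O}_S(mH) = 0$ (the restriction to each fiber is $\mathcal{O}_{\mathbb{P}^1}(m)$, which has vanishing $H^1$ for $m \geq 0$). Twisting by $f^{\ast} q$ and invoking the projection formula gives
\[
 f_{\ast} \mathcal{O}_S(mH + f^{\ast} q) \cong \bigoplus_{i=0}^{m} \mathcal{O}_Y(iE + q), \qquad R^1 f_{\ast} \mathcal{O}_S(mH + f^{\ast} q) = 0 .
\]
Consequently, the Leray spectral sequence degenerates and one obtains
\[
 h^0\bigl(S, \mathcal{O}_S(mH)\bigr) = \sum_{i=0}^{m} h^0\bigl(Y, \mathcal{O}_Y(iE)\bigr),
\]
together with the analogous identity for $\mathcal{O}_S(mH + f^{\ast} q)$.

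Next, I would compute each summand via Riemann--Roch on $Y$. The hypothesis $e > 2\gamma$ made in Section~\ref{Sec2.1} ensures that for every $i \geq 1$ the divisors $iE$ and $iE + q$ satisfy $\deg(iE) \geq e > 2\gamma - 2$, hence are non-special; so
\[
 h^0(\mathcal{O}_Y(iE)) = ie - \gamma + 1, \qquad h^0(\mathcal{O}_Y(iE + q)) = ie - \gamma + 2 \qquad (i \geq 1),
\]
while for $i = 0$ we have $h^0(\mathcal{O}_Y) = 1$ and $h^0(\mathcal{O}_Y(q)) = 1$. Summing the first type yields
\[
 h^0(S, \mathcal{O}_S(mH)) = 1 + \sum_{i=1}^{m}(ie - \gamma + 1) = \binom{m+1}{2}e - m\gamma + m + 1,
\]
and subtracting one establishes (i). Summing the second type gives
\[
 h^0(S, \mathcal{O}_S(mH + f^{\ast} q)) = 1 + \sum_{i=1}^{m}(ie - \gamma + 2) = \binom{m+1}{2}e - m\gamma + 2m + 1,
\]
which, after subtracting one, yields (ii).

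The argument is entirely mechanical; there is no serious obstacle, only the routine verification that the twists $iE$ and $iE + q$ are non-special, which follows directly from the standing assumption $e > 2\gamma$. The only minor point worth noting is the use of the Hartshorne convention for the projectivization, which is consistent with the identification $H \sim Y_0 + f^{\ast} E$ recalled in Section~\ref{Sec2.1} and ensures that $f_{\ast} \mathcal{O}_S(H) \cong \mathcal{O}_Y \oplus \mathcal{O}_Y(E)$.
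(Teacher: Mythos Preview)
Your proof is correct and follows essentially the same route as the paper. The paper's own proof simply cites \cite[Lemma~35]{GP2005} together with Riemann--Roch, which amounts to the pushforward identity $f_{\ast}\mathcal{O}_S(mH)\cong\bigoplus_{i=0}^{m}\mathcal{O}_Y(iE)$ you wrote down explicitly; your version is just the self-contained expansion of that citation, and the one tacit assumption $h^0(\mathcal{O}_Y(q))=1$ is harmless in the paper's context of $\gamma\geq 1$.
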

\begin{proof}
The claims follow easily from \cite[Lemma 35]{GP2005} and the Riemann-Roch theorem, since $E$ is a non-special divisor of $e \geq 2\gamma + 1$ on $Y$, and $S = \mathbb{P} (\mathcal{O}_Y \oplus \mathcal{O}_Y (E))$.
\end{proof}

In Sections~\ref{Sec4} and~\ref{Sec5}, we will consider curves on a cone $F$, which is the image of $S$ under the morphism induced by the tautological line bundle $\mathcal{O}_S (1) \cong \mathcal{O}_S (H)$.
\begin{lemma}\label{Sec2_Lemma_morphism_psi}
\begin{enumerate}[label=(\roman*), leftmargin=*, font=\rmfamily]
 \item The linear series $|\mathcal{O}_S (1)|$ is base point free and defines a morphism
\[
 \Psi := \Psi_{|\mathcal{O}_S (1)|} : S \to \mathbb{P}^R ,
\]
where $R = e-\gamma+1$.

 \item The morphism $\Psi$ is an isomorphism away from $Y_0$, which it contracts to a point.

 \item Geometrically, the image $F := \Psi (S)$ is a cone in $\mathbb{P}^R$ over a smooth curve $Y_e \cong Y$ of degree $e$, embedded in $\mathbb{P}^{R-1}$. The lines in the ruling of $F$ are the images of the fibers $f^{\ast} q$ for $q \in Y$.
\end{enumerate}
\end{lemma}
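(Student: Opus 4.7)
The plan is to extract all three claims from the identification $f_{\ast} \mathcal{O}_S(H) \cong \mathcal{O}_Y \oplus \mathcal{O}_Y(E)$ together with $H \sim Y_0 + f^{\ast} E$, both noted just above.

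For part (i), the projection formula gives
\[
 H^0(S, \mathcal{O}_S(H)) \cong H^0(Y, \mathcal{O}_Y) \oplus H^0(Y, \mathcal{O}_Y(E)),
\]
of dimension $1 + (e - \gamma + 1) = R + 1$ since $E$ is non-special with $e \geq 2\gamma + 1$; hence $\dim |\mathcal{O}_S(H)| = R$. Both $\mathcal{O}_Y$ and $\mathcal{O}_Y(E)$ are globally generated on $Y$ (the latter being very ample), so $f_{\ast} \mathcal{O}_S(H)$ is globally generated. Applying $f^{\ast}$ preserves this, and the tautological surjection $f^{\ast} f_{\ast} \mathcal{O}_S(H) \twoheadrightarrow \mathcal{O}_S(H)$ then forces $|\mathcal{O}_S(H)|$ to be base-point free, defining $\Psi : S \to \mathbb{P}^R$.

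For part (ii), the key calculation is $H|_{Y_0} \cong \mathcal{O}_{Y_0}(-E) \otimes \mathcal{O}_{Y_0}(E) \cong \mathcal{O}_{Y_0}$, using $Y_0^2 = -e$ and $\sigma_0^{\ast} f^{\ast} E = E$. Triviality of $H|_{Y_0}$ forces $\Psi|_{Y_0}$ to be constant, contracting $Y_0$ to a single point $P \in \mathbb{P}^R$. To prove that $\Psi$ is an isomorphism on $S \setminus Y_0$, I would combine: (a) via $\mathcal{O}_S(H - Y_0) \cong f^{\ast} \mathcal{O}_Y(E)$, the subspace of sections of $|\mathcal{O}_S(H)|$ vanishing on $Y_0$ is identified with the pullback $f^{\ast} H^0(Y, \mathcal{O}_Y(E))$, and since $|E|$ embeds $Y$ into $\mathbb{P}^{R-1}$, these pullback sections separate any two points lying on distinct fibers of $f$ and separate non-vertical tangent vectors; (b) on each fiber $f^{\ast} q \cong \mathbb{P}^1$ the restriction of $\mathcal{O}_S(H)$ has degree $H \cdot f^{\ast} q = 1$, and the base-point-free restricted series must equal the complete $|\mathcal{O}_{\mathbb{P}^1}(1)|$, embedding the fiber as a line. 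Combining (a) and (b) yields that $\Psi$ is injective with injective differential on $S \setminus Y_0$.

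For part (iii), I would identify $F = \Psi(S)$. The hyperplane $\Lambda \subset \mathbb{P}^R$ cut out by the $R$-dimensional subspace of sections vanishing on $Y_0$ has dimension $R - 1$. Since $Y_0 \cap Y_1 = \emptyset$ gives $\mathcal{O}_S(Y_0)|_{Y_1} \cong \mathcal{O}_{Y_1}$, we find $H|_{Y_1} \cong \mathcal{O}_Y(E)$ through $\sigma_1$, so $\Psi|_{Y_1}$ realizes the embedding of $Y \cong Y_1$ by $|E|$ onto a smooth curve $Y_e \subset \Lambda$ of degree $e$. For each $q \in Y$, the image $\Psi(f^{\ast} q)$ is a line passing through $P = \Psi(Y_0 \cap f^{\ast} q)$ and through $\Psi(\sigma_1(q)) \in Y_e$. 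Hence $F$ is the union of the lines joining $P$ to the points of $Y_e$, i.e., the cone over $Y_e$ with vertex $P$, with ruling exactly $\{\Psi(f^{\ast} q) : q \in Y\}$. The main technical obstacle is upgrading the set-theoretic and infinitesimal bijectivity of $\Psi$ on $S \setminus Y_0$ to a genuine isomorphism; the cleanest route is to appeal to the normality of $F$, which follows from the projective normality of $Y_e$ (guaranteed since $E$ is non-special with $e \geq 2\gamma+1$), so that Stein factorization exhibits $\Psi$ as a finite birational morphism onto a normal surface and therefore an isomorphism over the smooth locus $F \setminus \{P\}$.
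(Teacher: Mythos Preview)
Your proof is correct and covers the same ground as the paper's, but with a genuinely different balance: the paper dispatches (i) and (ii) almost entirely by citation---Hartshorne, Ex.~V.2.11 for base-point-freeness and \cite[Proposition~23]{GP2005} for the isomorphism away from $Y_0$---whereas you work these out directly from $f_{\ast}\mathcal{O}_S(H) \cong \mathcal{O}_Y \oplus \mathcal{O}_Y(E)$ and the intersection theory on $S$. Your argument for (ii) (separating fibers via $f^{\ast}H^0(Y,\mathcal{O}_Y(E))$, embedding each fiber by degree-one restriction) is in fact what underlies the cited results, so nothing is lost. For (iii) the paper invokes Bertini on $|\mathcal{O}_S(1)|$ and reads off the cone structure from general hyperplane sections, while you build the cone explicitly as the union of lines $\overline{P\,\Psi(\sigma_1(q))}$; these are equivalent descriptions. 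Your final remark about upgrading set-theoretic bijectivity to an isomorphism via normality of $F$ (using projective normality of $Y_e$ when $e \geq 2\gamma+1$) is more careful than what the paper writes, which simply asserts the isomorphism by reference; your route through Stein factorization is a clean way to close that gap.
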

\begin{proof}
The fact that $|\mathcal{O}_S (1)|$ is base point free follows from \cite[Ex.~V.2.11, p.385]{Hart77}. Lemma \ref{Sec2_dim_LinSer_on_S} gives $\dim |\mathcal{O}_S (1)| = e-\gamma+1 = R$, so the series defines a morphism $\Psi : S \to \mathbb{P}^R$, proving \textnormal{(i)}.

That $\Psi$ is an isomorphism away from $Y_0$ follows from \cite[Proposition~23]{GP2005}. Since $H \cdot Y_0 = 0$, we have $\Psi (Y_0) =: p$ is a point, establishing \textnormal{(ii)}.

By Bertini's theorem, a general element $Y_1$ of $|\mathcal{O}_S (Y_0 + f^{\ast} E)| = |\mathcal{O}_S (1)|$ is smooth and integral. Since $Y_1 = \sigma_1 (Y)$, we have $Y_1 \cong Y$. The restriction of $\mathcal{O}_S (1)$ to $Y_1$ defines  a divisor of degree $H \cdot Y_1 = H^2 = e$, so the induced morphism  $\Psi_{|_{Y_1}} : Y_1 \to \mathbb{P}^{R-1}$ is an embedding. Its image $Y_e := \Psi (Y_1) \cong Y$ has degree $e$, and thus $F := \Psi (S)$ is a cone in $\mathbb{P}^R$ with vertex $p := \Psi (Y_0)$, since general hyperplane sections of $F$ correspond to images under $\Psi$ of general elements of $|\mathcal{O}_S (1)|$. Finally, as $H \cdot f^{\ast} q = 1$ for any $q \in Y$, each fiber is mapped by $\Psi$ to a line in the ruling of $F$. This completes the proof.
\end{proof}

\subsection{Proof of Theorem A}\label{Sec2.2}

The conclusion of \textnormal{Theorem~A} follows follows from the proposition below, whose proof we now present.

\begin{prop}\label{ThmA_Prop}
Let $Y$ be a smooth projective curve, and $E$ be a non-special divisor on $Y$. Consider the ruled surface $f : S = \mathbb{P} (\mathcal{O}_Y \oplus \mathcal{O}_Y (E)) \to Y$, and let $X \subset S$ be a smooth curve linearly equivalent to $mH$, where $H$ denotes the tautological divisor on $S$, and $m \geq 2$ is an integer. Let $\varphi := f_{|_X}$ denote the restriction of $f$ to $X$. Then
\begin{enumerate}[label=(\roman*), leftmargin=*, font=\rmfamily]
 \item The morphism $\varphi$ is finite of degree $m$, and its direct image sheaf decomposes as:
 \begin{equation}\label{ThmA_Prop_Decomp}
  \varphi_{\ast} \mathcal{O}_X \cong \mathcal{O}_Y \oplus \mathcal{O}_Y (-E)  \oplus \cdots \oplus \mathcal{O}_Y (-(m-1)E) .
 \end{equation}

 \item For each integer $k$, the first higher direct image sheaf of $\mathcal{O}_S (-k)$ under $f$ satisfies:
 \begin{equation}\label{ThmA_Prop_Decomp_Coro}
 R^1 f_{\ast} \mathcal{O}_S (-k) \cong
 \begin{cases}
  0   & \mbox{ if } k \leq 1 \, ; \\
  \mathcal{O}_Y (-E) \oplus \cdots \oplus \mathcal{O}_Y (-(k-1)E)  & \mbox{ if } k \geq 2 \, .
 \end{cases}
\end{equation}
\end{enumerate}
\end{prop}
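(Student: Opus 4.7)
My plan is to prove part (ii) first, since the decomposition in (i) will follow by pushing forward the structure sequence of $X \subset S$ and invoking part (ii). The two main tools will be the identification of $f_{\ast} \mathcal{O}_S (kH)$ as a symmetric power of $\mathcal{V} := \mathcal{O}_Y \oplus \mathcal{O}_Y (E)$, together with relative Serre duality for the $\mathbb{P}^1$-bundle $f : S \to Y$. The degree assertion in (i) is immediate: for any $q \in Y$, $X \cdot f^{\ast} q = m \, H \cdot f^{\ast} q = m$, so $\varphi = f_{|_X}$ is a finite, surjective morphism of degree $m$.

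For part (ii), with Hartshorne's convention $S = \Proj (\operatorname{Sym} \mathcal{V})$, so for $k \geq 0$ one has
\[
 f_{\ast} \mathcal{O}_S (kH) \cong \operatorname{Sym}^k \mathcal{V} \cong \bigoplus_{j=0}^{k} \mathcal{O}_Y (jE) ,
\]
while $f_{\ast} \mathcal{O}_S (kH) = 0$ for $k < 0$. The relative dualizing sheaf is $\omega_{S/Y} \cong \mathcal{O}_S (-2H + f^{\ast} E)$. Relative Serre duality applied to the line bundle $\mathcal{O}_S (-kH)$, together with the projection formula, yields
\[
 R^1 f_{\ast} \mathcal{O}_S (-kH) \cong \bigl( f_{\ast} \mathcal{O}_S ((k-2)H) \otimes \mathcal{O}_Y (E) \bigr)^{\vee} .
\]
For $k \leq 1$ the right-hand side vanishes; for $k \geq 2$, substituting the formula above and dualizing gives $\bigoplus_{i=1}^{k-1} \mathcal{O}_Y (-iE)$, which is (\ref{ThmA_Prop_Decomp_Coro}).

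For part (i), I would push forward the short exact sequence $0 \to \mathcal{O}_S (-mH) \to \mathcal{O}_S \to \mathcal{O}_X \to 0$ via $f$. Since $m \geq 2$, $f_{\ast} \mathcal{O}_S (-mH) = 0$; combining this with $f_{\ast} \mathcal{O}_S \cong \mathcal{O}_Y$, $R^1 f_{\ast} \mathcal{O}_S = 0$, and part (ii), the long exact sequence of higher direct images collapses to
\[
 0 \to \mathcal{O}_Y \to \varphi_{\ast} \mathcal{O}_X \to \bigoplus_{i=1}^{m-1} \mathcal{O}_Y (-iE) \to 0 .
\]
The main obstacle is to split this extension. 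Its class lies in $\Ext^1_Y \bigl( \bigoplus_{i=1}^{m-1} \mathcal{O}_Y (-iE) , \mathcal{O}_Y \bigr) \cong \bigoplus_{i=1}^{m-1} H^1 (Y, \mathcal{O}_Y (iE))$. Under the standing hypothesis $\deg E = e > 2 \gamma$ from Section~\ref{Sec2.1}, each divisor $iE$ with $1 \leq i \leq m-1$ has degree at least $e > 2\gamma - 2$ and is therefore non-special, so every summand of this Ext group vanishes. The extension thus splits, producing the decomposition (\ref{ThmA_Prop_Decomp}).
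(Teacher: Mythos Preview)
Your argument is correct and is exactly the alternative the authors flag in the Remark following their proof: compute $R^1 f_{\ast}\mathcal{O}_S(-k)$ directly via relative Serre duality (equivalently the formula $R^1 f_{\ast}\mathcal{O}_S(k) \cong \operatorname{Sym}^{-k-2}(\mathcal{V}^{\vee}) \otimes \det(\mathcal{V}^{\vee})$ for $k\le -2$), then deduce (i) from the pushforward of the structure sequence of $X$. The paper instead runs an induction on $m$: it verifies (i) for $m=2$ by hand, proves a lemma showing that (i) for $X_m$ yields the formula (ii) for $k=m$, and in the passage from $m$ to $m+1$ pushes forward $0 \to \mathcal{O}_S(-m) \to \mathcal{O}_S(1) \to \mathcal{O}_{X_{m+1}}(1) \to 0$ and splits the resulting extension by an $\Ext^1$-vanishing. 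Your route is shorter and more conceptual; the paper's is deliberately more self-contained in that it avoids invoking relative duality.

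One small sharpening: to split $0 \to \mathcal{O}_Y \to \varphi_{\ast}\mathcal{O}_X \to \bigoplus_{i=1}^{m-1} \mathcal{O}_Y(-iE) \to 0$ you import the degree bound $e > 2\gamma$ from \S\ref{Sec2.1}, just as the paper does in its inductive step. But observe that the map $\mathcal{O}_Y = f_{\ast}\mathcal{O}_S \to \varphi_{\ast}\mathcal{O}_X$ here is exactly $\varphi^{\sharp}$, so this sequence \emph{is} the Tschirnhausen sequence, and the trace map $\tau_m$ recalled at the opening of \S\ref{Sec2.1} already splits it. Using that splitting instead removes any reliance on the degree bound and makes the proof go through under the proposition's literal hypothesis that $E$ be merely non-special (indeed, no hypothesis on $E$ is needed for either decomposition).
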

\begin{proof}
The strategy of the proof is to establish \textnormal{(\ref{ThmA_Prop_Decomp})} and \textnormal{(\ref{ThmA_Prop_Decomp_Coro})} simultaneously by induction. To that end, let us denote by $X_m$ a general curve in the linear equivalence class of $mH$, and by $\varphi_m$ the restriction of $f$ to $X_m$. Thus, we have a covering morphism
\[
 \varphi_m : X_m \to Y,
\]
which is the $m:1$ morphism $\varphi$ referred to in the proposition. For simplicity, we will omit the subscript $m$ when it is clear from the context which curve the map $f$ is being restricted to.

We begin by noting the following standard vanishing:
\begin{equation*}
 R^1 f_{\ast} \mathcal{O}_S (k) = 0 \quad \mbox{ if } \quad k \geq -1 \, .
\end{equation*}
Indeed, the restriction of $\mathcal{O}_S (k)$ to each fiber of $f$ is a line bundle of degree $k \geq -1$, and since $h^1 (\mathbb{P}^1, \mathcal{O}_{\mathbb{P}^1} (k)) = 0$, the vanishing follows from Grauert's theorem; see \cite[Corollary~III.12.9]{Hart77}.

We now show that the decomposition in part~(i) of the proposition implies the one in part~(ii).
\begin{lemma}\label{Lemma_ThmA_(1)=>(2)}
Let $m \geq 2$ and $X_m \sim mH$. If $\varphi_{\ast} \mathcal{O}_{X_m} \cong \mathcal{O}_Y \oplus \mathcal{O}_Y (-E)  \oplus \cdots \oplus \mathcal{O}_Y (-(m-1)E)$, then
\[
R^1 f_{\ast} \mathcal{O}_S (-m) \cong \mathcal{O}_Y (-E) \oplus \cdots \oplus \mathcal{O}_Y (-(m-1)E) .
\]
\end{lemma}

\begin{proof}[Proof of the lemma]
Consider the short exact sequence:
\[
 0 \to \mathcal{O}_S (-m)  \to \mathcal{O}_S \to \mathcal{O}_{X_m} \to 0 ,
\]
and apply the pushforward functor $f_{\ast}$ to it. Since $f_{\ast} \mathcal{O}_S (-m) = 0$, we obtain the derived exact sequence:
\[
 0 \to \mathcal{O}_Y \xrightarrow{\iota} \varphi_{\ast} \mathcal{O}_{X_m} \to R^1 f_{\ast} \mathcal{O}_S (-m)  \to R^1 f_{\ast} \mathcal{O}_S \to \cdots \, .
\]
But $R^1 f_{\ast} \mathcal{O}_S = 0$, so we have a short exact sequence:
\begin{equation}
\label{Sec2_ThmA_Lemma_SES_splits}
  0 \to \mathcal{O}_Y \xrightarrow{\iota} \varphi_{\ast} \mathcal{O}_{X_m} \to R^1 f_{\ast} \mathcal{O}_S (-m)  \to 0 \, .
\end{equation}
By assumption, $\varphi_{\ast} \mathcal{O}_{X_m} \cong \mathcal{O}_Y \oplus \mathcal{O}_Y (-E)  \oplus \cdots \oplus \mathcal{O}_Y (-(m-1)E)$, so $\iota$ must be the inclusion
\[
 \iota (s) = (cs, 0, \ldots , 0) \mbox{ for } s \in \mathcal{O}_Y, \mbox{ and some } c \in \mathbb{C}^{\ast},
\]
as the only global section comes from $\mathcal{O}_Y$, while each $\mathcal{O}_Y (-kE)$ has negative degree for $k \geq 1$. Hence, the exact sequence \eqref{Sec2_ThmA_Lemma_SES_splits} splits, and we conclude:
\[
 R^1 f_{\ast} \mathcal{O}_S (-m) \cong \varphi_{\ast} \mathcal{O}_X / \mathcal{O}_Y \cong \mathcal{O}_Y (-E) \oplus \cdots \oplus \mathcal{O}_Y ((-m+1)E)
\]
as claimed.
\end{proof}

We now prove \eqref{ThmA_Prop_Decomp} by induction on $m$.

\begin{enumerate}[label=(\arabic*), leftmargin=*, font=\rmfamily]
\item \textbf{Base case: $m = 2$.}
Consider the short exact sequence:
\[
  0 \to \mathcal{O}_S (-1)  \to \mathcal{O}_S (1) \to \mathcal{O}_{X_2} (1) \to 0 ,
\]
and apply $f_{\ast}$. Since $f_{\ast}\mathcal{O}_S (-1) = R^1 f_{\ast} \mathcal{O}_S (-1) = 0$, we get
\[
 \varphi_{\ast} \mathcal{O}_{X_2} (1) \cong f_{\ast} \mathcal{O}_S (1) \cong \mathcal{O}_Y \oplus \mathcal{O}_Y (E) .
\]
As $\mathcal{O}_{X_2} (1) \cong \mathcal{O}_{X_2} (H_{|_{X_2}}) \cong \mathcal{O}_{X_2} ((Y_0 + f^{\ast})_{|_{X_2}}) \cong \mathcal{O}_{X_2} (\varphi^{\ast} E)$, we deduce
\[
 \varphi_{\ast} \mathcal{O}_{X_2} (1) \cong \varphi_{\ast} \mathcal{O}_{X_2} (\varphi^{\ast} E) \cong \varphi_{\ast} \mathcal{O}_{X_2} \otimes \mathcal{O}_Y (E) ,
\]
hence,
\[
 \varphi_{\ast} \mathcal{O}_{X_2} \cong \varphi_{\ast} \mathcal{O}_{X_2} (1) \otimes \mathcal{O}_Y (-E) \cong \left( \mathcal{O}_Y \oplus \mathcal{O}_Y (E) \right) \otimes \mathcal{O}_Y (-E) \cong \mathcal{O}_Y (-E) \oplus \mathcal{O}_Y .
\]

 \item \textbf{Inductive step.} Assume the result holds for $X_m \sim mH$, i.e.,
  \[
   \varphi_{\ast} \mathcal{O}_{X_m} \cong \mathcal{O}_Y \oplus \mathcal{O}_Y (-E)  \oplus \cdots \oplus \mathcal{O}_Y (-(m-1)E) .
  \]
  Hence, by Lemma~\ref{Lemma_ThmA_(1)=>(2)},
\[
  R^1 f_{\ast} \mathcal{O}_S (-m) \cong \mathcal{O}_Y (-E) \oplus \cdots \oplus \mathcal{O}_Y (-(m-1)E)
\]
holds as well.

\item \textbf{Step $m+1$.}
Consider the exact sequence:
\[
 0 \to \mathcal{O}_S (-m)  \to \mathcal{O}_S(1) \to \mathcal{O}_{X_{m+1}} (1) \to 0 \, ,
\]
and apply $f_{\ast}$, obtaining:
\[
 0 \to f_{\ast} \mathcal{O}_S(1) \to \varphi_{\ast} \mathcal{O}_{X_{m+1}} (1) \to R^1 f_{\ast} \mathcal{O}_S (-m) \to R^1 f_{\ast} \mathcal{O}_S (1) \, .
\]
The last term vanishes,  and by the inductive hypothesis:
\[
 R^1 f_{\ast} \mathcal{O}_S (-m) \cong \mathcal{O}_Y (-E) \oplus \cdots \oplus \mathcal{O}_Y (-(m-1)E) \, .
\]
Since $\mathcal{O}_{X_{m+1}} (1) \cong \mathcal{O}_{X_{m+1}} (\varphi^{\ast} E)$, we obtain by the projection formula
\[
 \varphi_{\ast} \mathcal{O}_{X_{m+1}} (1) \cong \varphi_{\ast} \mathcal{O}_{X_{m+1}} \otimes  \mathcal{O}_Y (E).
\]
Hence, the short exact sequence becomes:
\begin{equation}\label{Sec2_ThmA_SES_Xm+1}
 \begin{aligned}
  0 & \to \mathcal{O}_Y \oplus \mathcal{O}_Y (E) \to \varphi_{\ast} \mathcal{O}_{X_{m+1}} \otimes \mathcal{O}_Y (E)
  \to \mathcal{O}_Y (-E) \oplus \cdots \oplus \mathcal{O}_Y (-(m-1)E) \to 0 .
 \end{aligned}
\end{equation}
Since $\deg E = e \geq 2\gamma+1$, for all summands $\mathcal{O}_Y (-kE)$, $k = 1, \ldots , m$, we have $H^1 (Y, \mathcal{O}_Y (-kE)) = 0$, so we get:
\[
\begin{aligned}
 \Ext^1 & (R^1 f_{\ast} \mathcal{O}_S (-m), \mathcal{O}_Y \oplus \mathcal{O}_Y (E)) \\
 & \cong
 H^1 (Y, \left( \mathcal{O}_Y (E) \oplus \cdots \oplus \mathcal{O}_Y ((m-1)E) \right) \otimes \left( \mathcal{O}_Y \oplus \mathcal{O}_Y (E)) \right)) \\
 & \cong
 H^1 \left(Y,  \mathcal{O}_Y (E) \oplus ( \oplus^2_1 \mathcal{O}_Y (2E)) \oplus \cdots \oplus (\oplus^2_1 \mathcal{O}_Y ((m-1)E)  ) \oplus \mathcal{O}_Y (mE) \right) \\
 & = 0 \, .
\end{aligned}
\]
Therefore, the exact sequence \eqref{Sec2_ThmA_SES_Xm+1} splits and
\[
 \varphi_{\ast} \mathcal{O}_{X_{m+1}} \otimes \mathcal{O}_Y (E) \cong
 \mathcal{O}_Y (E) \oplus \mathcal{O}_Y \oplus \mathcal{O}_Y (-E) \oplus \cdots \oplus \mathcal{O}_Y (-(m-1)E) \, .
\]
Tensoring with $\mathcal{O}_Y (-E)$ gives:
\[
 \varphi_{\ast} \mathcal{O}_{X_{m+1}} \cong
 \mathcal{O}_Y \oplus \mathcal{O}_Y (-E) \oplus \cdots \oplus \mathcal{O}_Y (-mE),
\]
as required.
\end{enumerate}
This completes the proof of \textnormal{Proposition~\ref{ThmA_Prop}}.

\end{proof}

\begin{remark}
We note that one could alternatively apply the general formula for a rank-two vector bundle $E$ over a curve $Y$, with $f : S = \mathbb{P} (\mathcal{E}) \to Y$, namely:
\[
 R^1 f_*(\mathcal{O}_S(k)) \cong
 \begin{cases}
  0     & \mbox{ if } k \geq -1 \\
  \mathrm{Sym}^{-k - 2}(\mathcal{E}^\vee) \otimes \det(\mathcal{E}^\vee) & \mbox{ if } k \leq -2 ,
 \end{cases}
\]
to give a shorter proof of \textnormal{Theorem~A}. However, we choose to proceed via \textnormal{Proposition~\ref{ThmA_Prop}} for the sake of simplicity and self-containment.
\end{remark}

\subsection{Proof of Theorem~B}\label{Sec2.3}

The assumptions on the curve $Y$, the divisor $E$, and the morphism $f : S \to Y$ are the same as in \textnormal{Theorem~A} and \textnormal{Proposition~\ref{ThmA_Prop}}. However, the curve under consideration is now $X \sim mH + f^{\ast} q$, where $q \in Y$ is a point. The associated morphism is $\varphi : f_{|_X} \to Y$, which differs from the setting of \textnormal{Theorem~A}.

We begin by noting that a general curve $X \in |\mathcal{O}_S (mH + f^{\ast} q)|$ is a smooth and integral, and passes through the point $q_0 := Y_0 \cap f^{\ast} q$. According to \cite[Prop.~36]{GP2005}, this point $q_0$ is the unique fixed point of the linear system $|\mathcal{O}_S (mH + f^{\ast} q)|$ on $S$. Since $X$ intersects each fiber of $f$ transversely in $m$ points and passes through $q_0$, the restriction $\varphi := f_{|_X} X \to Y$ defines an $m:1$ covering $X \to Y$.

To prove \textnormal{(i)} in \textnormal{Theorem~B}, we apply the projection formula \cite[Ex.~III.8.3, p.~253]{Hart77} to obtain:
\[
 R^1 f_{\ast} \mathcal{O}_S (-mH - f^{\ast}q) \cong R^1 f_{\ast} \mathcal{O}_S (-mH) \otimes \mathcal{O}_Y (-q) \, .
\]
Using \eqref{ThmA_Prop_Decomp_Coro}, it follows that
\[
 R^1 f_{\ast} \mathcal{O}_S (-mH - f^{\ast}q) \cong \mathcal{O}_Y (-E-q) \oplus \cdots \oplus \mathcal{O}_Y (-(m-1)E-q) .
\]
Now consider the short exact sequence:
\[
 0 \to \mathcal{O}_S (-mH-f^{\ast}q) \to \mathcal{O}_S \to \mathcal{O}_X \to 0 ,
\]
and apply the functor $f_{\ast}$. Since $f_{\ast} \mathcal{O}_S (-mH-f^{\ast}q) = 0$ and $R^1 f_{\ast} \mathcal{O}_S = 0$, we obtain the exact sequence:
\begin{equation}\label{Sec2_ThmB_SES_Xm_i}
 0 \to \mathcal{O}_Y \to \varphi_{\ast} \mathcal{O}_X \to R^1 f_{\ast} \mathcal{O}_S (-mH-f^{\ast}q) \to 0 \, .
\end{equation}
To determine whether the sequence splits, we compute:
\[
\begin{aligned}
 \Ext^1 (R^1 f_{\ast} \mathcal{O}_S (-mH-f^{\ast}q), \mathcal{O}_Y)
 & \cong \Ext^1 (\mathcal{O}_Y (-E-q) \oplus \cdots \oplus \mathcal{O}_Y (-(m-1)E-q), \mathcal{O}_Y) \\ & \cong H^1 (Y, \mathcal{O}_Y (E+q) \oplus \cdots \oplus \mathcal{O}_Y ((m-1)E+q)) \\
 & = 0 \, ,
\end{aligned}
\]
since $\deg (kE + q) > 2\gamma - 2$ for $k \geq 1$. Thus, the short exact sequence \eqref{Sec2_ThmB_SES_Xm_i} splits, and we conclude:
\[
 \varphi_{\ast} \mathcal{O}_X =  \mathcal{O}_Y \oplus \left(\mathcal{O}_Y (-E) \cdots \oplus \mathcal{O}_Y ((-m+1)E) \right) \otimes \mathcal{O}_Y (-q) .
\]

To prove \textnormal{(ii)} in \textnormal{Theorem~B}, observe that ${Y_0}_{|_X} = q_0$, since $X$ contains $q_0$ and
\[
 Y_0 \cdot X = Y_0 \cdot (mH + f^{\ast} q) = 1 .
\]
Consider the exact sequence:
\[
 0 \to \mathcal{O}_S (-(m-1)H-f^{\ast}(E+q)) \to \mathcal{O}_S (Y_0) \to \mathcal{O}_X (q_0) \to 0 \, ,
\]
and apply the functor $f_{\ast}$. As before, we have:
\[
 f_{\ast}\mathcal{O}_S (-(m-1)H-f^{\ast}(E+q)) = 0 \quad \mbox{ and } \quad R^1 f_{\ast}\mathcal{O}_S (Y_0) = 0 ,
\]
while $f_{\ast}\mathcal{O}_S (Y_0) \cong \mathcal{O}_Y \oplus \mathcal{O}_Y (-E)$. Hence, we obtain the exact sequence:
\[
  0 \to \mathcal{O}_Y \oplus \mathcal{O}_Y (-E) \to \varphi_{\ast} \mathcal{O}_X (q_0) \to R^1 f_{\ast} \mathcal{O}_S (-(m-1)H-f^{\ast}(E+q)) \to 0 \, .
\]
By arguments similar to those used in part \textnormal{(i)}, the sequence splits. Using \eqref{ThmA_Prop_Decomp_Coro}, we get:
\[
 R^1 f_{\ast} \mathcal{O}_S (-(m-1)H-f^{\ast}(E+q)) \cong \left[ \mathcal{O}_Y (-E) \oplus \cdots \oplus \mathcal{O}_Y (-(m-2)E) \right] \otimes \mathcal{O}_Y (-E-q) .
\]
Therefore,
\[
 \varphi_{\ast} \mathcal{O}_X (q_0) \cong \mathcal{O}_Y \oplus \mathcal{O}_Y (-E) \oplus  \mathcal{O}_Y (-2E-q) \oplus \cdots \oplus \mathcal{O}_Y (-(m-1)E-q)
\]
as claimed.

This completes the proof of \textnormal{Theorem~B}.

\medskip

\section{Background on Hilbert schemes, normal bundles, and Gaussian Maps}\label{Sec3}

In this section, we collect several auxiliary results that will be used in the proofs of \textnormal{Theorems~A$_m$} and \textnormal{B$_m$}. These include basic facts about the Hilbert scheme of curves, exact sequences involving normal bundles, a description of the Gaussian map and its restrictions to subspaces, as well as a key corollary summarizing results previously established in \cite{CilMir1990}, \cite{CLM96}, and \cite{CIK21}.

\subsection{Essential facts about the Hilbert scheme of curves}\label{Sec31}

We recall a few basic results concerning the Hilbert scheme of smooth projective curves, which will be used in the subsequent sections.

Let $\mathcal{I}_{d,g,r}$ denote the union of irreducible components of the Hilbert scheme whose general points correspond to smooth, irreducible, non-degenerate complex curves of degree $d$ and genus $g$ in $\mathbb{P}^r$. A component of $\mathcal{I}_{d,g,r}$ is called \emph{regular} if it is reduced and of the expected dimension
\[
 \lambda_{d,g,r} := (r+1)d - (r-3)(g-1).
\]
Otherwise, it is called \emph{superabundant}. When the Brill–Noether number
\[
 \rho(d,g,r) := g - (r+1)(g - d + r)
\]
is non-negative, i.e., $\rho(d,g,r) \geq 0$, it is known that $\mathcal{I}_{d,g,r}$ contains a unique component dominating $\mathcal{M}_g$, the moduli space of smooth curves of genus $g$. This component is often referred to as the \emph{distinguished component}. Moreover, when $d \geq \max \{g + r, 2g - 1\}$, the Hilbert scheme $\mathcal{I}_{d,g,r}$ is irreducible.

If $\mathcal{I} \subset \mathcal{I}_{d,g,r}$ is an irreducible component, then the dimension of the Zariski tangent space at a general point $[C] \in \mathcal{I}$ is given by
\[
 \dim T_{[C]} \mathcal{I} = h^0(C, N_{C/\mathbb{P}^r}),
\]
where $N_{C/\mathbb{P}^r}$ is the normal bundle of the embedding $C \subset \mathbb{P}^r$. Furthermore, for any such component $\mathcal{I} \subset \mathcal{I}_{d,g,r}$, we have the lower bound
\[
 \dim \mathcal{I} \geq h^0(C, N_{C/\mathbb{P}^r}) - h^1(C, N_{C/\mathbb{P}^r}) = \lambda_{d,g,r}.
\]
A component $\mathcal{I}$ is said to be \emph{generically smooth} if
\[
 \dim \mathcal{I} = \dim T_{[C]} \mathcal{I} = h^0(C, N_{C/\mathbb{P}^r})
\]
for a general $[C] \in \mathcal{I}$.
Otherwise, $\mathcal{I}$ is said to be \emph{non-reduced}.

An accessible and thorough exposition of these results can be found in \cite{Har82}, \cite{CS87}, and \cite{EH24}. For convenience, we summarize several of the above facts, along with \cite[Theorem~13.6, p.~484]{CS87}, in the following proposition.

\begin{prop}\label{Sec3_Hilb_Sch_irred}
Let $d \geq 1$, $g \geq 0$, and $r \geq 2$ be integers.
\begin{enumerate}[label=(\roman*), leftmargin=*, font=\rmfamily]
 \item For all triples $(d, g, r)$ with $\rho(d, g, r) \geq 0$, there exists a unique irreducible component $\mathcal{I}_0 \subset \mathcal{I}_{d,g,r}$ that dominates $\mathcal{M}_g$. In particular, a general point $[C] \in \mathcal{I}_0$ corresponds to a curve with general moduli.

 \item If $r \leq d - g$ and $d \geq 2g - 1$, then $\mathcal{I}_{d,g,r}$ is irreducible and generically smooth of dimension
 \[
 \dim \mathcal{I}_{d,g,r} = \lambda_{d,g,r} = (r+1)d - (r - 3)(g - 1).
 \]
\end{enumerate}
\end{prop}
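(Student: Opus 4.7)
The plan is to analyze $\mathcal{I}_{d,g,r}$ through its natural forgetful morphism to $\mathcal{M}_g$ and reduce both assertions to classical statements from Brill--Noether theory. To this end, first I would introduce the auxiliary parameter space $\mathcal{G}^r_d$ of triples $(C, L, V)$, where $C$ is a smooth curve of genus $g$, $L \in \Pic^d(C)$, and $V \subset H^0(C,L)$ is an $(r+1)$-dimensional subspace. On the open locus where the linear series $|V|$ is very ample, a choice of basis of $V$ produces a closed embedding $C \hookrightarrow \mathbb{P}^r$, and the whole of $\mathcal{I}_{d,g,r}$ is obtained as a $\mathrm{PGL}_{r+1}$-quotient of such data. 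Thus the fibers of $\mathcal{I}_{d,g,r} \to \mathcal{M}_g$ are, up to finite automorphisms of $C$, $\mathrm{PGL}_{r+1}$-bundles over open subsets of the Brill--Noether varieties $G^r_d(C)$, and the component structure of $\mathcal{I}_{d,g,r}$ is controlled by that of $\mathcal{G}^r_d$.

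For part \textnormal{(i)}, I would invoke the combination of Gieseker's smoothness theorem (the resolution of Petri's conjecture) together with the connectedness result of Fulton--Lazarsfeld: for a general $[C] \in \mathcal{M}_g$ with $\rho(d,g,r) \geq 0$, the variety $G^r_d(C)$ is smooth and irreducible of dimension $\rho$, and moreover a general linear series on such $C$ is very ample. Relativizing over the universal curve via the relative Picard scheme then yields a single irreducible component of $\mathcal{G}^r_d$ dominating $\mathcal{M}_g$, and passing to the $\mathrm{PGL}_{r+1}$-quotient produces the unique irreducible component $\mathcal{I}_0 \subset \mathcal{I}_{d,g,r}$ dominating $\mathcal{M}_g$. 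A dimension count gives
\[
 \dim \mathcal{I}_0 \;=\; \dim \mathcal{M}_g + \rho(d,g,r) + \dim \mathrm{PGL}_{r+1} \;=\; \lambda_{d,g,r}.
\]

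For part \textnormal{(ii)}, the hypothesis $d \geq 2g-1$ forces every line bundle of degree $d$ on any smooth curve of genus $g$ to be non-special, so that $h^0(C,L) = d-g+1$ uniformly. Hence the Brill--Noether condition $h^0 \geq r+1$ becomes automatic when $r \leq d-g$, and $G^r_d(C)$ is simply a Grassmann bundle over $\Pic^d(C)$ with fibers $G(r+1, d-g+1)$, irreducible and smooth of the expected dimension. Together with an open very ampleness condition on $(L,V)$, the relative version over $\mathcal{M}_g$ is globally irreducible, and taking the $\mathrm{PGL}_{r+1}$-quotient yields irreducibility of $\mathcal{I}_{d,g,r}$. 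For generic smoothness one computes $H^1(C,N_{C/\mathbb{P}^r})$ via
\[
 0 \to \mathcal{O}_C \to L^{\oplus(r+1)} \to \varphi^{\ast} T_{\mathbb{P}^r} \to 0 \qquad \text{and} \qquad 0 \to T_C \to \varphi^{\ast} T_{\mathbb{P}^r} \to N_{C/\mathbb{P}^r} \to 0,
\]
together with the non-speciality of $L$ (and its twists of degree $\geq 2g-1$), to conclude $H^1(C,N_{C/\mathbb{P}^r}) = 0$, whence $\dim \mathcal{I}_{d,g,r} = h^0(N_{C/\mathbb{P}^r}) = \lambda_{d,g,r}$.

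The principal obstacle is the uniqueness assertion in \textnormal{(i)}: ruling out additional components of $G^r_d(C)$ for a general $C$ is precisely the content of the Fulton--Lazarsfeld connectedness theorem, which I would cite rather than reprove. A secondary technical point arises in \textnormal{(ii)} in the boundary cases $d = 2g-1$ and $d = 2g$, where very ampleness is no longer automatic but must be verified generically by excluding the finitely many line bundles of the form $K_C + D$ with $D$ effective of small degree; this is a codimension computation on $\Pic^d(C)$, standard but requiring care.
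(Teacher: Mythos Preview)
The paper does not actually prove this proposition: it is stated as a summary of classical results, with references to \cite{Har82}, \cite{CS87}, and \cite{EH24}, and in particular to \cite[Theorem~13.6, p.~484]{CS87}. There is no argument given in the paper beyond these citations.

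Your sketch, by contrast, outlines an actual proof along the standard lines one finds in those references: the forgetful map $\mathcal{I}_{d,g,r} \dashrightarrow \mathcal{M}_g$, the fibers as $\mathrm{PGL}_{r+1}$-bundles over open subsets of $G^r_d(C)$, Gieseker's theorem and Fulton--Lazarsfeld connectedness for part~(i), and the Grassmann-bundle description plus the Euler-sequence computation of $H^1(N_{C/\mathbb{P}^r})$ for part~(ii). This is the right architecture and is essentially what a full proof in the cited sources looks like.

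Two small remarks on precision. First, in the $H^1$ computation for part~(ii), non-speciality of $L$ alone already gives $H^1(T_{\mathbb{P}^r}|_C)=0$ from the restricted Euler sequence, and hence $H^1(N_{C/\mathbb{P}^r})=0$; no twists of $L$ are needed. Second, your blanket claim in part~(i) that ``a general linear series on such $C$ is very ample'' is stronger than what the proposition asserts and is not literally true for all $r\geq 2$ (e.g.\ for $r=2$ the genus is forced by the degree); what one actually needs, and what the cited references establish in the relevant range, is that the locus of very ample series is nonempty and open in the unique component of $G^r_d(C)$. With these adjustments your outline is sound.
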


\medskip

\subsection{Normal bundles of curves covering smooth curves}\label{Sec32}

The following proposition summarizes two statements proven in \cite[Lemma~4]{CIK21} and \cite[Proposition~3]{CIK24a}.
\begin{prop}\label{Prop_NormBundSeq}
Let $X \subset \mathbb{P}^r$, $r \geq 3$, be a smooth irreducible projective curve and $p \in \mathbb{P}^r$. Consider the projection $\pi_p$ with center $p$ to a hyperplane $\mathbb{P}^{r-1} \subset \mathbb{P}^r$
\[
 \pi_p : X \to \mathbb{P}^{r-1} \, .
\]
Assume that the image of $X$ is a smooth irreducible curve $Y \subset \mathbb{P}^{r-1}$. Denote by $N_{X / \mathbb{P}^r}$ the normal bundle of $X$ in $\mathbb{P}^r$, and by $N_{Y / \mathbb{P}^{r-1}}$ the normal bundle of $Y$ in $\mathbb{P}^{r-1}$. Let $\varphi : X \to Y$ be the morphism induced by $\pi_p$.
\begin{enumerate}[label=(\alph*), leftmargin=*, font=\rmfamily]
 \item If $p \notin X$, then
\begin{equation}\label{NormBundSeq_p_away}
 0 \to \mathcal{O}_X (1) \otimes \mathcal{O}_X (R_{\varphi}) \to N_{X / \mathbb{P}^r} \to \varphi^{\ast} N_{Y / \mathbb{P}^{r-1}} \to 0 \, ,
\end{equation}
where $R_{\varphi}$ is the ramification divisor of the morphism $\varphi : X \to Y$.

 \item If $p \in X$, then
\begin{equation}\label{NormBundSeq_p_on}
 0 \to \mathcal{O}_X (1) \otimes \mathcal{O}_X (R_{\varphi} + 2p) \to N_{X / \mathbb{P}^r} \to \varphi^{\ast} N_{Y / \mathbb{P}^{r-1}}  \otimes \mathcal{O}_X (p) \to 0 \, ,
\end{equation}
where $R_{\varphi}$ is the ramification divisor of the morphism $\varphi : X \to Y$.
\end{enumerate}
\end{prop}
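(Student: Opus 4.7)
The plan is a snake-lemma argument linking the normal-bundle short exact sequences of $X \subset \mathbb{P}^r$ and $Y \subset \mathbb{P}^{r-1}$, with vertical maps induced by the differentials $d\pi_p$ and $d\varphi$. The common input, which follows from a direct comparison of the Euler sequences on $\mathbb{P}^r$ and $\mathbb{P}^{r-1}$, is that on $\mathbb{P}^r \setminus \{p\}$ the differential $d\pi_p : T_{\mathbb{P}^r} \to \pi_p^{\ast}T_{\mathbb{P}^{r-1}}$ is surjective with kernel isomorphic to $\mathcal{O}_{\mathbb{P}^r}(1)|_{\mathbb{P}^r \setminus \{p\}}$; geometrically, this kernel at a point $x$ is the tangent direction of the line through $p$ and $x$.

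For case (a), since $p \notin X$, restricting this kernel sequence to $X$ is harmless, and I would form the commutative diagram whose rows are the two normal-bundle sequences and whose vertical arrows are $d\varphi : T_X \to \varphi^{\ast}T_Y$, the restriction $d\pi_p|_X : T_{\mathbb{P}^r}|_X \to \varphi^{\ast}T_{\mathbb{P}^{r-1}}|_Y$, and the induced map on normal bundles. Here $d\varphi$ is injective because $\varphi$ is finite, and its cokernel is the structure sheaf $\mathcal{O}_{R_{\varphi}}$ by the standard ramification identity $\Omega_X \cong \varphi^{\ast}\Omega_Y \otimes \mathcal{O}_X(R_{\varphi})$. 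The snake lemma then produces a surjection $N_{X/\mathbb{P}^r} \twoheadrightarrow \varphi^{\ast}N_{Y/\mathbb{P}^{r-1}}$ whose kernel $K$ fits into a short exact sequence $0 \to \mathcal{O}_X(1) \to K \to \mathcal{O}_{R_{\varphi}} \to 0$. Since $K$ is torsion-free as a subsheaf of the locally free sheaf $N_{X/\mathbb{P}^r}$ on a smooth curve, the extension is forced to be $K \cong \mathcal{O}_X(1) \otimes \mathcal{O}_X(R_{\varphi})$, yielding \eqref{NormBundSeq_p_away}.

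For case (b), the obstruction is that $\pi_p$ is undefined at $p$, so $d\pi_p$ does not extend globally to $T_{\mathbb{P}^r}|_X$. My preferred remedy is to pass to the blowup $\sigma : \tilde{\mathbb{P}}^r \to \mathbb{P}^r$ at $p$ with exceptional divisor $E$: the resolved projection $\tilde{\pi}_p$ is a morphism (realising $\tilde{\mathbb{P}}^r$ as a $\mathbb{P}^1$-bundle over $\mathbb{P}^{r-1}$), and since $X$ is smooth at $p$ its strict transform $\tilde X \cong X$ meets $E$ transversally at one point $\tilde p$, identified with $p$ under $\sigma$. Running the snake-lemma argument of part (a) on the blowup and then transferring back via the relation $\sigma^{\ast}N_{X/\mathbb{P}^r} \cong N_{\tilde X/\tilde{\mathbb{P}}^r} \otimes \mathcal{O}_{\tilde X}(\tilde p)$ for the strict transform of a smooth curve through a blowup centre should introduce exactly the twists that differentiate \eqref{NormBundSeq_p_on} from \eqref{NormBundSeq_p_away}. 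A parallel and more hands-on route is to extend $d\pi_p|_X$ across $p$ directly in local coordinates by absorbing its pole with a uniformiser of $X$ at $p$, producing a surjection $T_{\mathbb{P}^r}|_X \twoheadrightarrow \varphi^{\ast}T_{\mathbb{P}^{r-1}}|_Y \otimes \mathcal{O}_X(p)$ with kernel $\mathcal{O}_X(1) \otimes \mathcal{O}_X(2p)$; the snake-lemma step then proceeds exactly as in case (a).

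The main obstacle will be the bookkeeping of twists in case (b): simultaneously justifying the $\mathcal{O}_X(p)$ appearing on the quotient, the $\mathcal{O}_X(2p)$ on the kernel, and verifying that the snake lemma upgrades the latter to $\mathcal{O}_X(R_{\varphi} + 2p)$ without unexpected extra contributions at $p$ coming from the ramification sheaf. A useful consistency check throughout is to compare the determinants of both sides of \eqref{NormBundSeq_p_on} against the expected $\det N_{X/\mathbb{P}^r}$, which should pin down the twist factors unambiguously. Once case (a) is established and this twist accounting is verified, the conclusion in case (b) follows by the same snake-lemma mechanism.
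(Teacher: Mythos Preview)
The paper does not actually prove this proposition. Immediately before the statement, the authors write that it ``summarizes two statements proven in \cite[Lemma~4]{CIK21} and \cite[Proposition~3]{CIK24a}'', and no argument is supplied in the present paper. So there is no in-paper proof against which to compare your proposal.

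On its own merits, your snake-lemma strategy for case~(a) is the standard and correct one: the restricted differential $d\pi_p|_X$ is surjective with kernel $\mathcal{O}_X(1)$, the map $d\varphi$ is injective with cokernel $\mathcal{O}_{R_\varphi}$, and the snake sequence forces the kernel of $N_{X/\mathbb{P}^r} \to \varphi^{\ast}N_{Y/\mathbb{P}^{r-1}}$ to be a torsion-free rank-one sheaf sitting in $0 \to \mathcal{O}_X(1) \to K \to \mathcal{O}_{R_\varphi} \to 0$, hence $K \cong \mathcal{O}_X(1)(R_\varphi)$ exactly as you say. For case~(b), both routes you outline---blowing up at $p$, or extending $d\pi_p$ across $p$ with a local uniformiser---are viable; the relation $N_{\tilde X/\tilde{\mathbb{P}}^r} \cong \sigma^{\ast}N_{X/\mathbb{P}^r}(-\tilde p)$ for the strict transform of a smooth curve through the blowup centre is correct, and the twist bookkeeping you flag as the main obstacle is genuine but routine. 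Your determinant consistency check is a reliable way to confirm that no extra contribution at $p$ has been missed.
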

\begin{remark}
When $p \notin X$ and $\varphi : X \to Y$ is an isomorphism, then $R_{\varphi} = 0$ and the exact sequence \eqref{NormBundSeq_p_away} reduces to the well-known exact sequence
\begin{equation*}\label{NormBundSeq_p_away_iso}
 0 \to \mathcal{O}_X (1) \to N_{X / \mathbb{P}^r} \to \varphi^{\ast} N_{Y / \mathbb{P}^{r-1}} \to 0 \, .
\end{equation*}
When $p \in X$ and $\varphi : X \to Y$ is an isomorphism, then again $R_{\varphi} = 0$ and the exact sequence \eqref{NormBundSeq_p_on} reduces to another known exact sequence
\begin{equation*}\label{NormBundSeq_p_on_iso}
 0 \to \mathcal{O}_X (1) \otimes \mathcal{O}_X (2p) \to N_{X / \mathbb{P}^r} \to \varphi^{\ast} N_{Y / \mathbb{P}^{r-1}}  \otimes \mathcal{O}_X (p) \to 0 \, ,
\end{equation*}
see for example \cite[Proposition~6]{FS19}.
\end{remark}

\medskip

\subsection{The Gaussian map and its restrictions in the context of projections}\label{Sec33}

Suppose $Y$ is a smooth curve and $M$, $L$ are line bundles on $Y$. Consider the multiplication map
\[
 \mu_{M,L} : H^0(Y, M) \otimes H^0(Y, L) \to H^0(Y, M \otimes L).
\]
Wahl introduced in \cite{Wahl90} the \emph{Gaussian map} $\Phi_{M,L}$, a version of which we will use in the proof of \textnormal{Theorem~A$_m$}. We briefly recall its local description.

Let $\mathscr{R}(M,L)$ denote the kernel of $\mu_{M,L}$. Take $\alpha = \sum_i m_i \otimes \ell_i \in \mathscr{R}(M,L)$ and let $U$ be an affine open subset of $Y$. Suppose $M|_U = \mathcal{O}_U \cdot S$ and $L|_U = \mathcal{O}_U \cdot T$, so that locally $m_i = a_i S$ and $\ell_i = b_i T$ for $a_i, b_i \in \Gamma(U, \mathcal{O}_Y)$. Since $\alpha \in \mathscr{R}(M,L)$, we have $\sum_i a_i b_i = 0$. The Gaussian map $\Phi_{M,L}$ is then defined locally by
\[
 \Phi_{M,L}(\alpha)|_U := \sum_i (a_i\, d b_i - b_i\, d a_i)\, S \otimes T \in H^0(U, \omega_Y \otimes M \otimes L).
\]
One can verify that if $\sum_i m_i \otimes \ell_i = 0$, then $\Phi_{M,L}(\sum_i m_i \otimes \ell_i) = 0$, so the local definition extends to a global map.

Now, let $V \subset H^0(Y, L)$ be a vector subspace. Consider the restricted multiplication map
\[
 \mu_{M,V} := {\mu_{M, L}}_{|_{H^0 (Y,M) \otimes V}} : H^0 (Y, M) \otimes V \to H^0 (Y, M \otimes L) ,
\]
and denote its kernel by $\mathscr{R}(M, V)$. The \emph{restricted Gaussian map}
\[
 \Phi_{M,V} : \mathscr{R}(M,V) \to H^0(Y, \omega_Y \otimes M \otimes L)
\]
is defined as the restriction of $\Phi_{M,L}$ to $\mathscr{R}(M, V) \subset \mathscr{R}(M, L)$.

\medskip

The following proposition extends results previously obtained by Wahl, Ciliberto, Lopez, and Miranda to the setting of restricted Gaussian maps.

\begin{prop}\label{Sec3_PropNormGaussMap}
Let $Y$ be a smooth curve of genus $\gamma \geq 1$, and let $L$ be a very ample line bundle on $Y$. Suppose $V \subseteq H^0(Y, L)$ is a linear subspace of dimension $\dim V \geq 4$, such that the associated morphism $\Psi_V : Y \to \mathbb{P}(V^\vee)$ is an embedding. Assume that for some integer $k \geq 1$, the multiplication map
\[
  \mu_{\omega_Y \otimes L^{k-1}, V} : H^0(Y, \omega_Y \otimes L^{k-1}) \otimes V \to H^0(Y, \omega_Y \otimes L^k)
\]
is surjective. Let $N_Y$ denote the normal bundle of $Y \subset \mathbb{P}(V^\vee)$. Then:
\begin{enumerate}[label=(\alph*), leftmargin=*, font=\rmfamily]
 \item If $k=1$, then $h^0(N_Y(-1)) = \dim V + \corank \Phi_{\omega_Y, V}$ ;

 \item If $k \geq 2$, then $h^0(N_Y(-k)) = \corank \Phi_{\omega_Y \otimes L^{k-1}, V}$ ;

 \item If $\corank \Phi_{\omega_Y, V} = 0$, then $h^0(N_Y(-k)) = 0$ for all $k \geq 2$.
\end{enumerate}
\end{prop}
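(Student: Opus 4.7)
The plan is to compute $h^0(N_Y(-k))$ via Serre duality by identifying $H^1(N_Y^{\vee} \otimes \omega_Y \otimes L^k)$ through an exact sequence on $Y$ involving the bundle $M_V$, where $M_V$ is defined by the evaluation
\[
 0 \to M_V \to V \otimes \mathcal{O}_Y \to L \to 0 \, .
\]
From the Euler sequence on $\mathbb{P}(V^{\vee})$ restricted to $Y$ one has $M_V \cong \Omega_{\mathbb{P}(V^{\vee})}|_Y \otimes L$, and combining this with the conormal sequence $0 \to N_Y^{\vee} \to \Omega_{\mathbb{P}(V^{\vee})}|_Y \to \omega_Y \to 0$, and then tensoring by $\omega_Y \otimes L^{k-1}$, I would produce the key sequence
\[
 0 \to N_Y^{\vee} \otimes \omega_Y \otimes L^k \to M_V \otimes \omega_Y \otimes L^{k-1} \to \omega_Y^{\otimes 2} \otimes L^k \to 0 \, .
\]

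Taking cohomology, the left-hand $H^0$ vanishes (the bundle has negative degree for any $k\geq 1$), and I would use the $M_V$-defining sequence tensored by $\omega_Y\otimes L^{k-1}$ to identify $H^0(M_V \otimes \omega_Y \otimes L^{k-1})$ with the kernel $\mathscr{R}(\omega_Y \otimes L^{k-1},V)$ of the restricted multiplication map. The crucial step, following Wahl's construction, is to verify that the induced map
\[
 \mathscr{R}(\omega_Y \otimes L^{k-1},V) \to H^0(Y,\omega_Y^{\otimes 2} \otimes L^k)
\]
agrees with the restricted Gaussian map $\Phi_{\omega_Y \otimes L^{k-1},V}$; this is a local computation in \v{C}ech coordinates using that the extension class defining $M_V$ is essentially $d\log$. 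With this identification and Serre duality, the long exact sequence gives
\[
 h^0(N_Y(-k)) = \corank \Phi_{\omega_Y \otimes L^{k-1},V} + h^1(M_V \otimes \omega_Y \otimes L^{k-1}) \, .
\]

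The remaining work is to compute $h^1(M_V \otimes \omega_Y \otimes L^{k-1})$ from the long exact sequence associated with $0 \to M_V \otimes \omega_Y \otimes L^{k-1} \to V \otimes \omega_Y \otimes L^{k-1} \to \omega_Y \otimes L^k \to 0$. For $k \geq 2$ one has $H^1(\omega_Y \otimes L^{k-1}) = 0$ (the degree exceeds $2\gamma-2$), so the surjectivity of $\mu_{\omega_Y \otimes L^{k-1},V}$ assumed in the hypothesis forces this $H^1$ to vanish, yielding (b). For $k = 1$, the term $V \otimes H^1(\omega_Y) \cong V$ survives because $\mu_{\omega_Y, V}$ is surjective and $H^1(\omega_Y \otimes L) = 0$, producing the extra $\dim V$ summand in (a).

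For part (c), starting from the surjectivity of $\Phi_{\omega_Y,V}$, the plan is to bootstrap to the surjectivity of $\Phi_{\omega_Y \otimes L^{k-1},V}$ for all $k \geq 2$ by induction, using the standard compatibility of Gaussian maps with multiplication: if $\Phi_{M,V}$ is surjective and the multiplication maps $\mu_{M,V}$ and $\mu_{L^{k-1},V}$ are surjective, then $\Phi_{M \otimes L, V}$ is surjective, and so forth. Very ampleness of $L$ and projective normality on a non-special, high-degree linear series provide the required surjectivity of the multiplication maps. Then (b) immediately yields (c). The main obstacle I expect is a careful justification that the connecting homomorphism really is the Gaussian map (not merely proportional to it on each piece), and a clean bookkeeping of the inductive step in (c), both of which are delicate but ultimately follow the blueprint set in \cite{Wahl90}, \cite{CilMir1990}, and \cite{CLM96}.
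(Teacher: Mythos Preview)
Your approach to (a) and (b) via the conormal/Euler sequences and the identification of the connecting map with the restricted Gaussian map is exactly the Ciliberto--Miranda argument that the paper cites and declines to reproduce; so there you are aligned with the paper.

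For (c) there is a genuine, if minor, divergence. You propose to iterate the Gaussian-map compatibility diagram to show $\Phi_{\omega_Y\otimes L^{k-1},V}$ is surjective for every $k\geq 2$, and then invoke (b) at each $k$. The paper only runs the diagram once, to pass from $\Phi_{\omega_Y,V}$ surjective to $\Phi_{\omega_Y\otimes L,V}$ surjective (using Green's $H^0$-lemma for the vertical multiplication map, which is where the hypothesis $\dim V\geq 4$ enters), obtaining $h^0(N_Y(-2))=0$. For $k\geq 3$ it avoids Gaussian maps entirely and simply uses the inclusion $N_Y(-k-1)\hookrightarrow N_Y(-k)$ coming from a hyperplane section, so that $h^0(N_Y(-2))=0$ forces $h^0(N_Y(-k))=0$ for all $k\geq 2$ by induction. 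Your route works too, but note that your appeal to ``projective normality on a non-special, high-degree linear series'' is not quite the right justification when $V\subsetneq H^0(L)$; the correct input at each inductive step is Green's Theorem~(4.e.1), which applies because $\dim V\geq 4$. The paper's twist-by-a-hyperplane argument sidesteps this bookkeeping.
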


\begin{proof}
Statements \textnormal{(a)} and \textnormal{(b)} extend the results of \cite[Proposition (1.2), pp.~420--421]{CilMir1990} to the setting of restricted Gaussian maps and can be proven by very similar arguments. Similarly, statement \textnormal{(c)} generalizes \cite[Proposition (2.15), pp.~170–171]{CLM96}. For reader's convenience, we include a proof of \textnormal{(c)} here.

Consider the  following commutative diagram:
\[
\begin{CD}
\mathscr{R}(\omega_Y, V) \otimes V @>{\Phi_{\omega_Y, V} \otimes \mathrm{id}_V}>> H^0(\omega_Y^2 \otimes L) \otimes V \\
@V{\nu}VV @V{\mu}VV \\
\mathscr{R}(\omega_Y \otimes L, V) @>{\Phi_{\omega_Y \otimes L, V}}>> H^0(\omega_Y^2 \otimes L^2)
\end{CD}
\]
Here, $\nu$ is the natural inclusion, and $\mu$ is the multiplication map. Since $\dim V \geq 4$, it follows from \cite[Theorem~(4.e.1), p.~162]{Green84} that $\mu$ is surjective. Given that $\corank \Phi_{\omega_Y,V} = 0$, the composition $\mu \circ (\Phi_{\omega_Y, V} \otimes \mathrm{id}_V)$ is also surjective. By the commutativity of the diagram, it follows that $\Phi_{\omega_Y \otimes L, V}$ is surjective, and hence $H^0(N_Y(-2)) = 0$.
To deduce the vanishing $H^0(N_Y(-k)) = 0$ for all $k \geq 2$, consider the short exact sequences:
\[
 0 \to N_Y(-k-1) \to N_Y(-k) \to \mathcal{O}_{Y \cap H} \to 0,
\]
where $H \subset \mathbb{P}(V^\vee)$ is a general hyperplane. Taking global sections and applying induction on yields the desired vanishing.
\end{proof}

We have the following important corollary of \textnormal{Proposition~\ref{Sec3_PropNormGaussMap}}.

\begin{coro}
\label{Sec3_PropNormGaussMap_Coro}
Let $Y$ be a general smooth curve of genus $\gamma \geq 10$, and let $E$ be a general divisor of degree $e \geq 2\gamma + 1$ on $Y$. Let $V \subseteq H^0(Y, \mathcal{O}_Y (E))$ be a general linear subspace of dimension $r := \dim V \geq 4$, and consider the embedding
\[
  \Psi_V : Y \to \mathbb{P} (V^{\vee}) =: \mathbb{P}^{r-1}
\]
induced by $V$. Let $N_{Y}$ denote the normal bundle of $Y \subset \mathbb{P}^{r-1}$.
\begin{enumerate}[label=(\alph*), leftmargin=*, font=\rmfamily]
 \item If $V = H^0(Y, \mathcal{O}_Y (E))$, then:
  \begin{enumerate}[label=(\roman*), leftmargin=*, font=\rmfamily]
    \item the Gaussian map $\Phi_{\omega_Y, V}$ is surjective ;

    \item $h^0(N_Y(-1)) = r$ ;

    \item $h^0(N_{Y}(-k)) = 0$ for all $k \geq 2$.
  \end{enumerate}
  \item If $V \subsetneq H^0(Y, \mathcal{O}_Y (E))$, then statements \textnormal{(i)}-\textnormal{(iii)} remain valid provided that
\[
  r \geq \frac{2(e+2(\gamma-1))}{\gamma} .
\]
\end{enumerate}
\end{coro}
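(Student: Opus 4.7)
The plan is to reduce both parts (a) and (b) to verifying the surjectivity of the restricted Gaussian map $\Phi_{\omega_Y,V}$, and then to invoke Proposition \ref{Sec3_PropNormGaussMap} as a black box. Indeed, once $\corank \Phi_{\omega_Y, V} = 0$ has been established, statements (i) are literally the surjectivity itself; statements (ii) follow from Proposition \ref{Sec3_PropNormGaussMap}(a) applied with $k=1$, since it then yields $h^0(N_Y(-1)) = r + \corank \Phi_{\omega_Y,V} = r$; and statements (iii) follow from Proposition \ref{Sec3_PropNormGaussMap}(c). Thus the whole argument concentrates on controlling the corank of one restricted Gaussian map.

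For part (a), where $V = H^0(Y, \mathcal{O}_Y(E))$, I would appeal to the classical surjectivity results for complete Gaussian maps on general curves. For a general smooth curve $Y$ of genus $\gamma \geq 10$ and a general non-special divisor $E$ of degree $e \geq 2\gamma+1$, the map $\Phi_{\omega_Y,\mathcal{O}_Y(E)}$ is surjective; this is precisely the regime addressed by the Ciliberto--Lopez--Miranda theorems cited through \cite{CLM96} (building on \cite{CilMir1990}). Items (a)(ii) and (a)(iii) then drop out of Proposition \ref{Sec3_PropNormGaussMap}.

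For part (b), where $V \subsetneq H^0(Y, \mathcal{O}_Y(E))$ is a general subspace of dimension $r$, the hypothesis is forced by a dimension count. Under $r \geq 2(e+2(\gamma-1))/\gamma$, the multiplication map
\[
 \mu_{\omega_Y, V} \colon H^0(Y, \omega_Y) \otimes V \to H^0(Y, \omega_Y \otimes \mathcal{O}_Y(E))
\]
is surjective (with room to spare, since $\gamma r \geq e+\gamma-1$), so $\dim \mathscr{R}(\omega_Y, V) = \gamma r - (e+\gamma-1)$. Since $h^0(\omega_Y^2 \otimes \mathcal{O}_Y(E)) = e + 3\gamma - 3$ by Riemann--Roch, the inequality $\dim \mathscr{R}(\omega_Y, V) \geq h^0(\omega_Y^2 \otimes \mathcal{O}_Y(E))$ is exactly equivalent to the bound imposed on $r$. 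The surjectivity of $\Phi_{\omega_Y,V}$ for a general $V$ of this dimension is then derived by upper semicontinuity of corank along the Grassmannian $\Gr(r, H^0(Y, \mathcal{O}_Y(E)))$, specializing to a subspace where surjectivity is either inherited from part (a) via a filtration of $H^0(Y, \mathcal{O}_Y(E))$ down to $V$, or forced by an explicit choice of sections, in the spirit of the arguments used by the authors in \cite{CIK21} and \cite{CIK24a}.

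The main obstacle is the surjectivity claim in (b): the source and target of $\Phi_{\omega_Y, V}$ have matching dimensions up only to the slack built into the hypothesis on $r$, so neither a pure linear-algebra estimate nor a naive semicontinuity closes the argument. The technical core will be producing at least one subspace $V_0 \subseteq H^0(Y, \mathcal{O}_Y(E))$ of dimension $r$ for which $\Phi_{\omega_Y, V_0}$ is known to surject -- for instance through a chain obtained by successively removing suitably generic sections from $H^0(Y, \mathcal{O}_Y(E))$ -- and then transferring surjectivity to the generic point of $\Gr(r, H^0(Y, \mathcal{O}_Y(E)))$ by semicontinuity. This is where the weight of the proof lies; once it is in place, the rest of the corollary is a direct citation of Proposition \ref{Sec3_PropNormGaussMap}.
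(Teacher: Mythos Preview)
Your reduction of the corollary to the surjectivity of $\Phi_{\omega_Y,V}$, followed by an appeal to Proposition~\ref{Sec3_PropNormGaussMap}, is exactly what the paper does, and your treatment of part~(a) via \cite{CLM96} matches the paper's.

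For part~(b), however, the paper does not proceed by semicontinuity plus an explicit specialization $V_0$. Instead, it invokes Ballico's theorem \cite[Proposition~4.3]{Bal1995}, which asserts directly that for a \emph{general} subspace $V$ the restricted Gaussian map $\Phi_{\omega_Y,V}$ has maximal rank, and moreover gives the formula
\[
 \dim \mathscr{R}(\omega_Y,V) = \max\bigl\{0,\ \dim \mathscr{R}(\omega_Y,\mathcal{O}_Y(E)) - h^0(\omega_Y)\bigl(h^0(\mathcal{O}_Y(E)) - r\bigr)\bigr\}.
\]
With this in hand, your dimension count (which is correct) finishes the argument in one line: maximal rank means $\Phi_{\omega_Y,V}$ is either injective or surjective, and since $\dim \mathscr{R}(\omega_Y,V) = r\gamma - (e+\gamma-1) \geq e+3(\gamma-1) = h^0(\omega_Y^2(E))$ under the hypothesis on $r$, injectivity without surjectivity is impossible.

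Your proposed route has a genuine gap at precisely the point you flag as the ``main obstacle.'' The filtration idea---removing sections one at a time from $H^0(\mathcal{O}_Y(E))$---does not obviously preserve surjectivity: each removal shrinks the domain $\mathscr{R}(\omega_Y,V)$ by at most $\gamma$, and nothing prevents the image from shrinking as well. You would still need, at every step, exactly the kind of maximal-rank statement that Ballico supplies. Similarly, producing an explicit $V_0$ with $\Phi_{\omega_Y,V_0}$ surjective is not something the references \cite{CIK21} or \cite{CIK24a} accomplish by elementary means; in fact the paper's own citation of \cite[Proposition~7]{CIK21} leads back to Ballico. So the missing ingredient in your proposal is precisely the input from \cite{Bal1995}, which replaces the specialization step entirely.
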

\begin{proof}
In case \textnormal{(a)}, the map $\Phi_{\omega_Y, V}$ coincides with the standard Gaussian map $\Phi_{\omega_Y, \mathcal{O}_Y (E)}$, whose surjectivity follows from \cite[Proposition~(2.8)]{CLM96}. Statements \textnormal{(ii)} and \textnormal{(iii)} then follow directly from Proposition~\ref{Sec3_PropNormGaussMap}.

In case \textnormal{(b)}, assume $r \geq \frac{2(e + 2(\gamma-1))}{\gamma}$. The surjectivity of the restricted Gaussian map $\Phi_{\omega_Y, V}$ is established using the method in \cite[Proposition~7]{CIK21}, which we outline below.

Consider the multiplication map
\[
 \mu : H^0(Y, \omega_Y) \otimes H^0(Y, \mathcal{O}_Y(E)) \to H^0(Y, \omega_Y(E)).
\]
Let $\mu_V$ denote the restriction of $\mu$ to $H^0(Y, \omega_Y) \otimes V$.
We will use that the restricted Gaussian map
\[
 \Phi_{\omega_Y, V} : \mathscr{R}(M,V) \to H^0(Y, \omega_Y^2(E))
\]
is of maximal rank when $V$ is general, as it is shown in \cite{Bal1995}. By \cite[Proposition~4.3]{Bal1995}, we have
\[
 \dim \mathscr{R}(M,V) = \max\left\{0, \dim \mathscr{R}(M,\mathcal{O}_Y (E)) - h^0(Y, \omega_Y)\big(h^0(Y, \mathcal{O}_Y(E)) - \dim V\big)\right\}.
\]
Since $E$ is general of degree $e \geq 2\gamma + 1$, Green's result \cite{Green84} ensures that $\mu$ is surjective. Thus,
\[
 \dim \mathscr{R}(M,\mathcal{O}_Y (E)) = h^0(Y, \omega_Y)\cdot h^0(Y, \mathcal{O}_Y(E)) - h^0(Y, \omega_Y(E)) = (e - \gamma - 1)(\gamma - 1).
\]
Since $r \geq \frac{2(e + 2(\gamma-1))}{\gamma} > \frac{e + \gamma - 1}{\gamma}$, it follows that
\[
 \dim \mathscr{R}(M,\mathcal{O}_Y (E)) - h^0 (Y, \omega_Y) (h^0 (\mathcal{O}_Y (E)) - \dim V) = -e - \gamma + 1 + r \gamma > 0 .
\]
Therefore,
\[
 \dim \mathscr{R}(M,V) = -e - \gamma + 1 + r \gamma .
\]
If $\Phi_{\omega_Y, V}$ were not surjective, it would have to be injective and its image to be properly contained in $H^0 (Y, \omega^2_Y (E))$, which would imply
\[
 -e - \gamma + 1 + r \gamma = \dim \mathscr{R}(M,V) < \dim H^0 (Y, \omega^2_Y (E)) = e + 3(\gamma - 1) .
\]
The last however is impossible due to $r \geq \frac{2(e + 2(\gamma - 1))}{\gamma}$. Therefore, $\Phi_{\omega_Y, V}$ must be surjective. The remaining claims follow again from Proposition~\ref{Sec3_PropNormGaussMap}.
\end{proof}

\medskip

\section{Generically smooth components - proof of Theorem A\texorpdfstring{$_m$}{m}}\label{Sec4}

In this section, we provide the proof of \textnormal{Theorem~A$_m$}. For the reader's convenience, we begin by recalling the main objects that will be referenced frequently throughout this section:

\begin{itemize}[label=\textbullet, leftmargin=1cm, font=\rmfamily]
\item $Y$ is a smooth irreducible projective curve of genus $\gamma \geq 10$;

\item $E$ is a divisor on $Y$ of degree $e = \deg E \geq 2\gamma + 1$;

\item $S$ is the ruled surface $S = \mathbb{P}(\mathcal{O}_Y \oplus \mathcal{O}_Y(E))$, with natural projection map $f : S \to Y$;

\item $Y_0$ is the section of $S$ of minimum self-intersection, satisfying $Y_0^2 = -e$;

\item $H \sim Y_0 + f^{\ast}E$ is the tautological divisor, satisfying $\mathcal{O}_S (H) \cong \mathcal{O}_S (1)$ and $f_{\ast} \mathcal{O}_S (H) \cong \mathcal{O}_Y \oplus \mathcal{O}_Y (E)$;

\item $d = me$, $g = \binom{m}{2}e + m\gamma + 1 - m$, and $R = e - \gamma + 1$.
\end{itemize}

Our approach to proving \textnormal{Theorem~A$_m$} is similar to the strategy used in the proof of \cite[Theorem A]{CIK21}, though we introduce several technical - yet we believe meaningful - improvements. We begin by constructing a family of curves $\mathcal{F}_{m, R}$ in $\mathbb{P}^R$ that lie on cones, as described in the statement of the theorem. We then define the family $\mathcal{F}_{m,r}$ consisting of curves in $\mathbb{P}^r$, obtained as general projections of the curves in $\mathcal{F}_{m, R}$.
We will show that the closures in the Hilbert scheme of the loci parametrizing these families are precisely the components described in \textnormal{Theorem~A$_m$}.

\subsection{Construction of the families \texorpdfstring{$\mathcal{F}_{m, R}$}{FmR} and \texorpdfstring{$\mathcal{F}_{m, r}$}{Fmr}}

We define $\mathcal{F}_{m, R}$ as the family of curves that arise as the images under the morphism $\Psi$ determined by the linear system $|\mathcal{O}_S (1)|$ of the smooth irreducible curves in the linear system $|\mathcal{O}_S (m)|$ on $S$. By \textnormal{Lemma~\ref{Sec2_Lemma_morphism_psi}}, the image $F_R := \Psi (S)$ is a cone in $\mathbb{P}^R$ over a smooth curve of genus $\gamma$ and degree $e$ embedded in $\mathbb{P}^{R-1}$. In fact, a general hyperplane section of $F_R$ corresponds to the image of a general element of $|\mathcal{O}_S (1)|$ and thus gives an embedding of $Y$ via a general line bundle of degree $e$. Each curve $[X] \in \mathcal{F}_{m, R}$ is the image of curve $X_m \in mH$, where the morphism
\[
 \Psi : S \to \mathbb{P} (H^0 (S, \mathcal{O}_S (mH))^{\vee}) =: \mathbb{P}^{R}
\]
realizes $X$ as a curve on the cone $F_R \subset \mathbb{P}^R$. We can therefore view $X \subset F_R \subset \mathbb{P}^R$ as being cut on $S$ by a hypersurface of degree $m$.

We make the following straightforward observation explicit.

\begin{claim}
The curve $X \subset \mathbb{P}^{R}$ is linearly normal.
\end{claim}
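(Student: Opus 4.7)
The plan is to show that the restriction map
$H^{0}(\mathbb{P}^{R}, \mathcal{O}_{\mathbb{P}^{R}}(1)) \to H^{0}(X, \mathcal{O}_{X}(1))$
is surjective. The first observation is that $\Psi$ restricts to an isomorphism $X_{m} \xrightarrow{\sim} X$. This is because $X_{m} \cdot Y_{0} = mH \cdot Y_{0} = m(Y_{0}^{2} + f^{\ast}E \cdot Y_{0}) = m(-e + e) = 0$, and since $X_{m}$ is irreducible and distinct from $Y_{0}$, the two curves are disjoint; thus $X_{m}$ lies in the open locus on which $\Psi$ is an isomorphism, by Lemma~\ref{Sec2_Lemma_morphism_psi}~(ii). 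Consequently, $\mathcal{O}_{X}(1)$ pulls back to $\mathcal{O}_{X_{m}}(H)$, and by construction $H^{0}(\mathbb{P}^{R}, \mathcal{O}_{\mathbb{P}^{R}}(1)) \cong H^{0}(S, \mathcal{O}_{S}(H))$. So linear normality of $X$ reduces to surjectivity of the restriction
\[
 \rho \,:\, H^{0}(S, \mathcal{O}_{S}(H)) \longrightarrow H^{0}(X_{m}, \mathcal{O}_{X_{m}}(H)).
\]

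To establish this, I would tensor the standard ideal sequence of $X_{m} \subset S$ with $\mathcal{O}_{S}(H)$, obtaining
\[
 0 \to \mathcal{O}_{S}(-(m-1)H) \to \mathcal{O}_{S}(H) \to \mathcal{O}_{X_{m}}(H) \to 0,
\]
and pass to the long exact sequence in cohomology. The surjectivity of $\rho$ then follows once we verify that $H^{1}(S, \mathcal{O}_{S}(-(m-1)H)) = 0$.

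The key step is to compute $H^{1}(S, \mathcal{O}_{S}(-(m-1)H))$ via the Leray spectral sequence for $f: S \to Y$. Since the restriction of $\mathcal{O}_{S}(-(m-1)H)$ to each ruling is $\mathcal{O}_{\mathbb{P}^{1}}(-(m-1))$, one has $f_{\ast}\mathcal{O}_{S}(-(m-1)H) = 0$ for $m \geq 2$, so the $H^{1}$ in question reduces to $H^{0}(Y, R^{1}f_{\ast}\mathcal{O}_{S}(-(m-1)H))$. By Proposition~\ref{ThmA_Prop}~(ii), this higher direct image is zero when $m = 2$, while for $m \geq 3$ it is isomorphic to $\mathcal{O}_{Y}(-E) \oplus \cdots \oplus \mathcal{O}_{Y}(-(m-2)E)$. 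Each summand has strictly negative degree (as $e \geq 2\gamma + 1 > 0$), hence has no global sections. Therefore $H^{1}(S, \mathcal{O}_{S}(-(m-1)H)) = 0$ in every case, $\rho$ is surjective, and $X \subset \mathbb{P}^{R}$ is linearly normal.

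No serious obstacle is anticipated: the argument is entirely mechanical once Proposition~\ref{ThmA_Prop} is in hand, and the only place where non-triviality enters is in invoking that proposition to identify $R^{1}f_{\ast}\mathcal{O}_{S}(-(m-1)H)$ as a direct sum of negative line bundles on $Y$.
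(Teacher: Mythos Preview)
Your proposal is correct and follows essentially the same approach as the paper: both use the ideal sequence of $X_m$ twisted by $\mathcal{O}_S(H)$ and deduce the required $H^1$-vanishing from the decomposition of $R^1 f_{\ast}\mathcal{O}_S(-(m-1)H)$ provided by Proposition~\ref{ThmA_Prop}(ii). You are in fact slightly more careful than the paper in two respects: you explain explicitly why $\Psi|_{X_m}$ is an isomorphism (via $X_m \cdot Y_0 = 0$), and you record the correct last summand $\mathcal{O}_Y(-(m-2)E)$, whereas the paper's write-up contains a harmless typo here.
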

\begin{proof}[Proof of the claim]
The statement follows from the exact sequence
\[
 0 \to \mathcal{O}_S (1-m) \to \mathcal{O}_S (1) \to \mathcal{O}_{X_m} (1) \to 0
\]
together with the vanishing of $H^0(S, \mathcal{O}_S(1 - m))$ and $H^0(S, \mathcal{O}_S(1 - m))$ when $m \geq 2$. Since $m \geq 2$, $H^0(S, \mathcal{O}_S(1 - m)) = 0$. To verify that $H^1(S, \mathcal{O}_S(1 - m)) = 0$, we use
\[
 H^1 (S, \mathcal{O}_S (1-m)) = H^0 (Y, R^1 f_{\ast} \mathcal{O}_S (1-m)) = 0 ,
\]
because, by \eqref{ThmA_Prop_Decomp_Coro},
\[
 R^1 f_{\ast} \mathcal{O}_S (1-m) \cong \mathcal{O}_Y (-E) \oplus \cdots \oplus \mathcal{O}_Y (-(m-1)E) ,
\]
and none of summand has global sections when $e \geq 2\gamma - 1$. Finally, \textnormal{Lemma~\ref{Sec2_dim_LinSer_on_S}} implies $h^0 (X_m, \mathcal{O}_{X_m} (1)) = h^0 (S, \mathcal{O}_S (1)) = R+1$.
\end{proof}

To compute the dimension of the family $\mathcal{F}_{m,R}$, we consider the following contributions:

\noindent $\dim \mathcal{F}_{m,R} = $
\begin{itemize}[font=\sffamily, leftmargin=1.3cm, style=nextline]
     \item[$ + $] $3\gamma - 3$ \ : \ number of moduli of smooth curves $Y \in \mathcal{M}_{ \gamma }$;

     \item[$ + $] $\gamma$ \ : \ number of parameters for line bundles $\mathcal{O}_{Y} (E) \in \Pic (Y)$ of degree $e \geq 2\gamma - 1$, necessary to determine the ruled surface $S = \mathbb{P} (\mathcal{O}_{Y} \oplus \mathcal{O}_{Y} (E))$;

     \item[$ + $] $\binom{m+1}{2}e - m\gamma + m = \dim |\mathcal{O}_S (mH)|$ \ : \ number of parameters for choosing a curve  $X_m \sim mH$;

     \item[$ + $] $(R+1)^2 - 1$ \ : \ the dimension of the automorphism group $\Aut(\mathbb{P}^R)$;

     \item[$ - $] $(e- \gamma + 2)$ \ : \ the dimension of the subgroup $G_F \subset \Aut(\mathbb{P}^R)$ that fixes the scroll $F$, see \cite[Lemma~6.4, p.~148]{CCFM2009}.
\end{itemize}
A straightforward computation yields:
\begin{equation}\label{Sec3_DimFmR}
 \dim \mathcal{F}_{m,R} = g+(m-1)e+(-2m+5)(\gamma - 1)-1 + R^2 + 2R .
\end{equation}

\medskip

Next, we define the family $\mathcal{F}_{m,r}$ for $r < R$. This family consists of curves obtained as projections of curves in $\mathcal{F}_{m,R}$ from a general linear subspace $\Lambda \cong \mathbb{P}^{R - r - 1}$ of $\mathbb{P}^R$ onto a general subspace $\mathbb{P}^r \subset \mathbb{P}^R$, where $\Lambda$ and $\mathbb{P}^r$ are complementary in $\mathbb{P}^R$.

We first observe that under such a projection, the image of the cone $F_R \subset \mathbb{P}^R$ is again a cone, denoted $F_r \subset \mathbb{P}^r$. Indeed, for a point $z \in F_R$, its image $\pi_{\Lambda}(z)$ is the unique point of intersection of $\mathbb{P}^r$ with the linear span $\langle \Lambda, z \rangle \cong \mathbb{P}^{R - r}$. Since $F_R$ is a cone with vertex $P \in \mathbb{P}^R$, its ruling consists of lines through $P$. Each such line $\ell \subset F_R$ spans a space $\langle \Lambda, \ell \rangle \cong \mathbb{P}^{R - r + 1}$, which intersects $\mathbb{P}^r$ in a line $\pi_{\Lambda}(\ell) \cong \mathbb{P}^1$. Moreover, all such lines pass through the image $p := \pi_{\Lambda}(P) \in \mathbb{P}^r$, the projection of the vertex.

We formalize this observation as a lemma:

\begin{lemma}\label{Sec4_FR->Fr}
Let $\pi_{\Lambda} : F_R \to \mathbb{P}^r$ be the projection from a general linear subspace $\Lambda \cong \mathbb{P}^{R - r - 1} \subset \mathbb{P}^R$. Then the image $F_r := \pi_{\Lambda}(F_R) \subset \mathbb{P}^r$ is again a cone with vertex $p := \pi_{\Lambda}(P)$, and its ruling consists of the lines $\pi_{\Lambda}(\ell)$, where $\ell$ is a ruling line of $F_R$.
\end{lemma}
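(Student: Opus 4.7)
The approach will be to verify the lemma by a single dimension count showing that $\Lambda \cap F_R = \emptyset$ for a generic choice of $\Lambda$, and then to exploit the elementary fact that projection from a disjoint center carries lines to lines. All three conclusions---that $F_r$ is a cone, that its vertex is $p := \pi_\Lambda(P)$, and that its ruling consists of the images $\pi_\Lambda(\ell)$ of the ruling lines of $F_R$---will then follow together.

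First I would observe that, since $\dim F_R = 2$ and $\dim \Lambda = R - r - 1$, the expected dimension of $\Lambda \cap F_R$ is $2 + (R-r-1) - R = 1-r$, which is negative in the range of interest of \textnormal{Theorem~A$_m$}, where $r \geq \gamma \geq 10$. Thus for a generic $\Lambda$ we have $\Lambda \cap F_R = \emptyset$; in particular $P \notin \Lambda$, the point $p := \pi_\Lambda(P)$ is well defined, and $\pi_\Lambda$ restricts to a morphism on all of $F_R$.

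Next, for any ruling line $\ell \subset F_R$, the disjointness $\ell \cap \Lambda = \emptyset$ forces $\langle \Lambda, \ell \rangle$ to have dimension $R - r + 1$, and hence to meet the complementary subspace $\mathbb{P}^r$ in a line, namely $\pi_\Lambda(\ell) \subset \mathbb{P}^r$. Since $P \in \ell$, we obtain $p = \pi_\Lambda(P) \in \pi_\Lambda(\ell)$, so every projected ruling line passes through $p$.

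Finally, writing $F_R = \bigcup_\ell \ell$ as a union of its ruling lines and using that images commute with unions, one gets $F_r = \pi_\Lambda(F_R) = \bigcup_\ell \pi_\Lambda(\ell)$, a union of lines through $p$. This exhibits $F_r$ as a cone with vertex $p$ whose ruling is precisely $\{\pi_\Lambda(\ell)\}$, completing the proof. The only genuine subtlety is the dimension count ensuring that $\Lambda$ avoids $F_R$ (so that $\pi_\Lambda$ is defined on the entire surface and does not collapse any ruling line); once that is in place, the rest is elementary projective geometry and no serious obstacle remains.
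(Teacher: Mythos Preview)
Your argument is correct and follows essentially the same route as the paper: both proofs observe that for each ruling line $\ell$ of $F_R$ the span $\langle \Lambda, \ell\rangle$ has dimension $R-r+1$ and hence meets $\mathbb{P}^r$ in a line through $p=\pi_\Lambda(P)$, so that $F_r=\bigcup_\ell \pi_\Lambda(\ell)$ is a cone with vertex $p$. Your version is slightly more explicit in justifying the disjointness $\Lambda\cap F_R=\emptyset$ via the dimension count, which the paper uses implicitly here and states only later.
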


We now compute the dimension of the family $\mathcal{F}_{m,r}$ by relating it to the dimension of $\mathcal{F}_{m,R}$, the family of curves before projection. The contributions are as follows:

\noindent
$\dim \mathcal{F}_{m,r} = $
\begin{itemize}[font=\sffamily, leftmargin=1.3cm, style=nextline]
    \item[$+$] $ \dim \mathcal{F}_{m,R} $ \ : \ dimension of the family before projection;

    \item[$-$] $(R+1)^2 - 1$ \ : \ subtract the dimension of $\Aut(\mathbb{P}^R)$, accounting for reparametrizations of the ambient space;

    \item[$+$] $\dim \mathbb{G}(r, R)$ \ : \ number of parameters to choose a linear subspace $\mathbb{P}^r \subset \mathbb{P}^R$;

    \item[$+$] $(r+1)^2 - 1$ \ : \ add the dimension of $\Aut(\mathbb{P}^r)$, accounting for reparametrizations of the target space.
\end{itemize}
Using the previously computed value of $\dim \mathcal{F}_{m,R}$ from~\eqref{Sec3_DimFmR}, we obtain:
\begin{equation}\label{Sec3_DimFmr}
 \dim \mathcal{F}_{m,r} = g + (m - 1)e + (-2m + 5)(\gamma - 1) - 1 + Rr + R + r .
\end{equation}

Now we define $\mathcal{D}_{m,r}$, for $r \leq R$, as the closure in $\mathcal{I}_{me, g, r}$ of the locus parametrizing the family $\mathcal{F}_{m,r}$. From the definition and \eqref{Sec3_DimFmr}, it follows that
\[
 \dim \mathcal{D}_{m,r} = g + (m - 1)e + (-2m + 5)(\gamma - 1) - 1 + Rr + R + r .
\]

\subsection{Computing the dimension of the tangent spaces}

To show that $\mathcal{D}_{m,r}$ forms a generically smooth irreducible component of $\mathcal{I}_{me, g, r}$, it suffices to verify that for a general point $[X] \in \mathcal{F}_{m,r}$, the normal bundle $N_{X/\mathbb{P}^r}$ satisfies:
\[
 h^0 (X, N_{X/\mathbb{P}^r}) = g + (m - 1)e + (-2m + 5)(\gamma - 1) - 1 + Rr + R + r = \dim \mathcal{D}_{m,r} ,
\]
where $N_{X/\mathbb{P}^r}$ denotes the normal bundle of $X \subset \mathbb{P}^r$.

For clarity, and because of certain technical differences in the computations, we treat separately the following two cases:
\begin{description}
 \item[Case 1] $r = R$ (the complete embedding case), and

 \item[Case 2] $\max \left\lbrace \gamma, \frac{2(e+2(\gamma-1))}{\gamma} \right\rbrace \leq r < R$ (the incomplete embedding case)
\end{description}

\medskip

{\bf Case 1.}
We begin by computing $h^0 (X, N_{X/\mathbb{P}^r})$ in the case $r = R$. To do so, we use the fact that projection from the vertex $P$ of the cone $F_R$ to a general hyperplane $\Delta \subset \mathbb{P}^R$ presents $X$ as an $m:1$ covering
\[
 \varphi : X \to \Delta \cap F_R =: Y_R \, .
\]
The embedding $Y_R \subset \Delta \cong \mathbb{P}^{R-1}$ is induced by the restriction ${\mathcal{O}_S (1)}_{|_{Y_1}}$ of the line bundle $\mathcal{O}_S (1)$ to a general smooth curve $Y_1 \in |\mathcal{O}_S (1)|$. Since $Y_R \cong Y_1 \cong Y$ and ${\mathcal{O}_S (1)}_{|_{Y_1}} \cong \mathcal{O}_Y (E)$, where $Y \in \mathcal{M}_{\gamma}$ and $E \in \Div^{e} (Y)$ are general, it follows that $[Y_R] \in \mathcal{I}_{e, \gamma, R-1}$ is a general point. In particular, we have:
\begin{itemize}
 \item $h^0 (Y_R, N_{Y_R/ \mathbb{P}^{R-1}}) = Re - (R-4)(\gamma - 1)$, since $\mathcal{I}_{e, \gamma, R-1}$ is irreducible, generically smooth, and has the expected dimension $\lambda_{e, \gamma, R-1} = Re - (R-4)(\gamma - 1)$, due to $e \geq 2\gamma + 1$ and $R-1 = e - \gamma$, as explained in \textnormal{Proposition~\ref{Sec3_Hilb_Sch_irred}} ;

 \item $h^0 (Y_R, N_{Y_R/ \mathbb{P}^{R-1}} (-1)) = R$, by \textnormal{Corollary~\ref{Sec3_PropNormGaussMap_Coro}(ii)};

 \item $h^0 (Y_R, N_{Y_R/ \mathbb{P}^{R-1}} (-k)) = 0$ for $k \geq 2$, by \textnormal{Corollary~\ref{Sec3_PropNormGaussMap_Coro}(iii)}.
\end{itemize}
Since $X$ is the image under $\Psi$ of a general curve $X_m \in |\mathcal{O}_S (m)|$, and $\Psi$ is an isomorphism onto its image, it follows from \textnormal{Theorem~A} that
\begin{equation}\label{Sec4_varphi*}
 \varphi_{\ast} \mathcal{O}_X \cong \mathcal{O}_{Y_R} \oplus \mathcal{O}_{Y_R} (-1) \oplus \cdots \oplus \mathcal{O}_{Y_R} (-(m-1)).
\end{equation}
Applying the exact sequence \eqref{NormBundSeq_p_away} from \textnormal{Proposition~\ref{Prop_NormBundSeq}}, we obtain
\begin{equation}\label{NormBundSeq_p_away_Sec4}
 0 \to \mathcal{O}_X (1) \otimes \mathcal{O}_X (R_{\varphi}) \to N_{X / \mathbb{P}^R} \to \varphi^{\ast} N_{Y_R / \mathbb{P}^{R-1}} \to 0 ,
\end{equation}
where $R_{\varphi}$ is the ramification divisor of the covering $\varphi : X \to Y_R$. Since $\deg R_{\varphi} = m(m-1)e$ and $\deg X = H \cdot X_m = me$, it follows that
\[
\begin{aligned}
 \deg (\mathcal{O}_X (1) \otimes \mathcal{O}_X (R_{\varphi})) - (2g-2) & = m^2 e - \left[ 2\binom{m}{2}e + 2m\gamma + 2 - 2m - 2 \right] \\
 & = m (e - 2\gamma + 2).
\end{aligned}
\]
By assumption, $e \geq 2\gamma + 1$, so the degree of the line bundle $\mathcal{O}_X (1) \otimes \mathcal{O}_X (R_{\varphi})$ is strictly greater than $2g-2$. Therefore, the cohomology sequence associated to \eqref{NormBundSeq_p_away_Sec4} is exact on global sections. In particular, we obtain:
\[
 h^0 (X, N_{X / \mathbb{P}^R}) = h^0 (X,\mathcal{O}_X (1) \otimes \mathcal{O}_X (R_{\varphi})) + h^0 (X, \varphi^{\ast} N_{Y_R / \mathbb{P}^{R-1}}).
\]
By the Riemann–Roch theorem, we have
\[
 h^0 (X,\mathcal{O}_X (1) \otimes \mathcal{O}_X (R_{\varphi})) = m^2 e - g + 1 = g + m(e - 2(\gamma - 1)) - 1.
\]
To compute $h^0 (X, \varphi^{\ast} N_{Y_R / \mathbb{P}^{R-1}})$, we use that the projection formula:
\[
 h^0 (X, \varphi^{\ast} N_{Y_R / \mathbb{P}^{R-1}}) = h^0 (Y_R, \varphi_{\ast} \mathcal{O}_X \otimes N_{Y_R / \mathbb{P}^{R-1}}) .
\]
From \eqref{Sec4_varphi*}, it follows that
\[
 \varphi_{\ast} \mathcal{O}_X \otimes N_{Y_R / \mathbb{P}^{R-1}} \cong N_{Y_R / \mathbb{P}^{R-1}} \oplus N_{Y_R / \mathbb{P}^{R-1}} (-1) \oplus \cdots \oplus N_{Y_R / \mathbb{P}^{R-1}} (-(m-1)) \, .
\]
As noted above,
\[
 h^0 (Y_R, N_{Y_R / \mathbb{P}^{R-1}} (-1)) = R \mbox{ and } h^0 (Y_R, N_{Y_R / \mathbb{P}^{R-1}} (-k)) = 0 \mbox{ for all } k \geq 2.
\]
Hence,
\[
 \begin{aligned}
  h^0 (X, \varphi^{\ast} N_{Y_R / \mathbb{P}^{R-1}}) & = h^0 (Y_R, N_{Y_R / \mathbb{P}^{R-1}}) + h^0 (Y_R, N_{Y_R / \mathbb{P}^{R-1}} (-1)) \\
  & = Re - (R-4)(\gamma - 1) + R \\
  & = R^2 + 2R - e + 5(\gamma - 1) .
 \end{aligned}
\]
We conclude that:
\[
\begin{aligned}
 h^0 (X, N_{X / \mathbb{P}^R}) & = g + m(e - 2(\gamma - 1)) - 1 + R^2 + 2R - e + 5(\gamma - 1) \\
 & = g + (m-1)(e - 2(\gamma-1)) - 1 + 3(\gamma-1) + R^2 + 2R .
\end{aligned}
\]
This shows that $h^0 (X, N_{X / \mathbb{P}^R}) = \dim \mathcal{F}_{m,R} = \dim \mathcal{D}_{m,R}$, which implies that $\mathcal{D}_{m,R}$ is generically smooth irreducible component of $\mathcal{I}_{me,g,R}$.

\medskip

{\bf Case 2.}
Now we proceed to compute $h^0 (X, N_{X/\mathbb{P}^r})$ in the case where
\[
 \max \left\lbrace \gamma, \frac{2(e+2(\gamma-1))}{\gamma} \right\rbrace \leq r < R .
\]

According to \textnormal{Lemma~\ref{Sec4_FR->Fr}}, each curve $[X] \in \mathcal{F}_{m,r}$ lies on a cone $F_r \subset \mathbb{P}^r$ that arises as the image of a general projection $\pi_{\Lambda}$ from a cone $F_R \subset \mathbb{P}^R$, where $\Lambda \cong \mathbb{P}^{R-r-1}$ is the center of projection. That is, $X = \pi_{\Lambda} (X_R)$, where $X_R$ is the image of a curve $X_m \sim mH$ on $S$, or equivalently the intersection of $F_R$ with a general hypersurface of degree $m$ in $\mathbb{P}^R$.

To complete the calculation of $h^0  (X, N_{X/\mathbb{P}^r})$, we apply \textnormal{Corollary~\ref{Sec3_PropNormGaussMap_Coro}}. For this, we must ensure that a hyperplane section of $F_r$ arises as the projection of a general hyperplane section $Y_R \subset F_R$.

\begin{lemma}\label{Sec4_YR->Yr-general}
Let $H_R \subset \mathbb{P}^R$ be a general hyperplane containing $\Lambda$. Then the restricted projection
\[
 {\pi_{\Lambda}}_{|_{Y_R}} : Y_R \to H_R \cap \mathbb{P}^r \cong \mathbb{P}^{r-1}
\]
is a general linear projection in the sense that it maps $Y_R \subset H_R \cong \mathbb{P}^{R-1}$ to a general linear subspace $\mathbb{P}^{r-1} \subset H_R$.
\end{lemma}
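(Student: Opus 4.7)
The plan is to reduce the claim, by means of an incidence correspondence on linear subspaces, to the statement that a linear projection of a smooth curve in $\mathbb{P}^{R-1}$ from a \emph{generic} center of the correct dimension is itself a generic linear projection.

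The first observation I would make is that the restriction of $\pi_{\Lambda} : \mathbb{P}^R \dashrightarrow \mathbb{P}^r$ to the hyperplane $H_R$ (which contains $\Lambda$ by hypothesis) is precisely the linear projection $H_R \dashrightarrow H_R \cap \mathbb{P}^r$ from $\Lambda$, with target $H_R \cap \mathbb{P}^r$ an $(r-1)$-dimensional complement of $\Lambda$ inside $H_R \cong \mathbb{P}^{R-1}$. Consequently, $\pi_{\Lambda}|_{Y_R}$ coincides with the restriction to $Y_R$ of a linear projection whose center, of dimension $R-r-1$, lies entirely inside $H_R$.

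Next I would introduce the flag variety
\[
 \mathrm{Fl} = \{(\Lambda', H') : \Lambda' \in \Gr(R-r-1, \mathbb{P}^R), \; H' \in (\mathbb{P}^R)^{*}, \; \Lambda' \subset H'\} ,
\]
which is smooth, irreducible, and fibers surjectively over each of its two factors with irreducible fibers: over $\Gr(R-r-1, \mathbb{P}^R)$ the fiber is the $\mathbb{P}^r$ of hyperplanes containing a fixed $\Lambda'$, while over $(\mathbb{P}^R)^{*}$ the fiber is the Grassmannian of $(R-r-1)$-planes inside a fixed hyperplane. Since a general $\Lambda$ together with a general $H_R \supset \Lambda$ yields a generic point of $\mathrm{Fl}$, and since the second projection is dominant with irreducible fibers, the same generic pair can equivalently be obtained by first selecting a general hyperplane $H_R \in (\mathbb{P}^R)^{*}$ and then choosing a generic $(R-r-1)$-plane $\Lambda \subset H_R$. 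This \emph{order-swap} is the key point.

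With the order reversed, $Y_R = F_R \cap H_R$ is a smooth, linearly normal curve in $H_R$ of degree $e$ and genus $\gamma$ (by Bertini and the linear normality of $F_R \subset \mathbb{P}^R$ from \textnormal{Lemma~\ref{Sec2_Lemma_morphism_psi}}), embedded in $H_R \cong \mathbb{P}^{R-1}$ by a general line bundle of degree $e$ on $Y$. The map $\pi_{\Lambda}|_{H_R}$, with center a generic $(R-r-1)$-plane $\Lambda \subset H_R$, is then by construction a general linear projection of $Y_R$ to $\mathbb{P}^{r-1}$, and its target $H_R \cap \mathbb{P}^r$ is correspondingly a general $(r-1)$-dimensional subspace of $H_R$, exactly as claimed. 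I do not anticipate a serious obstacle in this plan; the only technical points are the irreducibility and surjectivity of the two projections from $\mathrm{Fl}$, and the identification of $\pi_{\Lambda}|_{H_R}$ with the linear projection from $\Lambda$ viewed inside $H_R$, both of which are routine.
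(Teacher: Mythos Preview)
Your proposal is correct and follows essentially the same approach as the paper: both argue that a general pair $(\Lambda, H_R)$ with $\Lambda \subset H_R$ yields a general center of projection inside $H_R \cong \mathbb{P}^{R-1}$, so that $\pi_{\Lambda}|_{Y_R}$ is a general linear projection of the smooth hyperplane section $Y_R = F_R \cap H_R$. Your use of the flag variety $\mathrm{Fl}$ to justify the order-swap is a cleaner formalization of what the paper asserts more directly (the paper instead phrases the generality in terms of the target $\mathbb{P}^r \cap H_R$ being a general $\mathbb{P}^{r-1}$ in $H_R$), but the substance is the same.
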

\begin{proof}
Since $R = e - \gamma + 1 \geq \gamma + 2 \geq 12$, and $10 \leq \gamma \leq r < R$, the general center $\Lambda \cong \mathbb{P}^{R-r-1}$ is disjoint from the cone $F_R$. As $H_R$ is general hypersurface containing $\Lambda$, the intersection $Y_R = H_R \cap F_R$ is a smooth curve, isomorphic to the base curve $Y$.

The space $\mathbb{P}^r \subset \mathbb{P}^R $ is general, and since $H_R$ is a hyperplane in $\mathbb{P}^R$, their intersection $\mathbb{P}^r \cap H_R$ is a general codimension-one linear subspace of $\mathbb{P}^r$, and hence a general $\mathbb{P}^{r-1} \subset H_R \cong \mathbb{P}^{R-1}$. Therefore, the restricted projection
 \[
 \pi_{\Lambda}|_{Y_R} : Y_R \to \mathbb{P}^{r-1}
 \]
is a general linear projection of the curve $Y_R \subset \mathbb{P}^{R-1}$ onto $\mathbb{P}^{r-1}$. Since $Y_R$ and $\Lambda \cap Y_R = \emptyset$, this projection is an embedding - as ensured by the assumption $r \geq 10$. Consequently, the image $Y_r := \pi_{\Lambda}(Y_R)$ lies in a general $\mathbb{P}^{r-1} \subset H_R \cong \mathbb{P}^{R-1}$, as claimed.
\end{proof}

In the given range for $r$, a general point $[X] \in \mathcal{F}_{m,r}$ represents a curve lying on a cone $F_r$. As established in \textnormal{Lemma~\ref{Sec4_YR->Yr-general}}, this cone is defined over a base curve $Y_r$, which arises as the image of a curve $Y_R \subset F_R$ under a general projection. The curve $Y_R$ lies in a hyperplane section $H_R \cong \mathbb{P}^{R-1}$, and $Y_r$ is its image in $\mathbb{P}^{r-1}$. Let $p$ denote the vertex of the cone $F_r$; projection from $p$ then realizes $X$ as an $m:1$ cover $\varphi : X \to Y_r$. As in the previous case, this gives rise to an exact sequence for the normal bundle:
\begin{equation*}\label{NormBundSeq_p_away_Sec4_Pr}
 0 \to \mathcal{O}_X (1) \otimes \mathcal{O}_X (R_{\varphi}) \to N_{X / \mathbb{P}^r} \to \varphi^{\ast} N_{Y_r / \mathbb{P}^{r-1}} \to 0 .
\end{equation*}
Moreover, by arguments analogous to those used previously, we have:
\begin{equation*}\label{Sec4_varphi*Pr}
 \varphi_{\ast} \mathcal{O}_X \cong \mathcal{O}_{Y_r} \oplus \mathcal{O}_{Y_r} (-1) \oplus \cdots \oplus \mathcal{O}_{Y_r} (-(m-1)).
\end{equation*}
Since $r \geq \max \left\lbrace \gamma, \frac{2(e+2(\gamma-1))}{\gamma} \right\rbrace$ and $\gamma \geq 10$, it follows that:
\begin{itemize}
 \item $h^0 (Y_r, N_{Y_r/ \mathbb{P}^{r-1}}) = re - (r-4)(\gamma - 1)$, due to $e \geq 2\gamma + 1$, $r-1 < R-1 = e - \gamma$, and \textnormal{Proposition~\ref{Sec3_Hilb_Sch_irred}};

 \item $h^0 (Y_r, N_{Y_r/ \mathbb{P}^{r-1}} (-1)) = r$, by \textnormal{Corollary~\ref{Sec3_PropNormGaussMap_Coro}(ii)}, as $Y_r$ arises from a general projection of $Y_R$;

 \item $h^0 (Y_r, N_{Y_r/ \mathbb{P}^{r-1}} (-k)) = 0$ for $k \geq 2$, by \textnormal{Corollary~\ref{Sec3_PropNormGaussMap_Coro}(iii)}, as $Y_r$ arises from a general projection of $Y_R$.
\end{itemize}
A similar calculation as before gives for $h^0 (X, N_{X/ \mathbb{P}^r})$
\[
 \begin{aligned}
  h^0 (X, N_{X/ \mathbb{P}^r}) & =
  h^0 (X, \mathcal{O}_X (1) \otimes \mathcal{O}_X (R_{\varphi})) + h^0 (X, \varphi^{\ast} N_{Y_r / \mathbb{P}^{r-1}}) \\
  & = g + m(e - 2(\gamma - 1)) - 1 + re - (r-4)(\gamma - 1) + r \\
  & = g + (m-1)(e - 2(\gamma-1)) - 1 + 3(\gamma-1) + (r+1)(R+1) - 1 .
 \end{aligned}
\]
This shows that the dimension of $\dim \mathcal{D}_{m,r}$ coincides with that of the tangent space to $\mathcal{I}_{me, g, r}$ at a general point $[X] \in \mathcal{D}_{m,r}$. Therefore, $\mathcal{D}_{m,r}$ is a generically smooth, irreducible component of  dimension
\[
 \dim \mathcal{D}_{m,r} = g + (m-1)(e - 2(\gamma-1)) - 1 + 3(\gamma-1) + Rr+R+r .
\]
This completes the proof of {\rm Theorem A$_m$.}

\medskip

\begin{remark}
Let $\pi : \mathcal{I}_{d, g, R} \dashrightarrow \mathcal{M}_g$ denote the projection to the moduli space of smooth curves of genus $g$. Then the proof implies that the image of the component $\mathcal{D}_{m,R} \subset \mathcal{I}_{d, g, R}$ under $\pi$ has dimension
\[
\begin{aligned}
 \dim \pi (\mathcal{D}_{m,R}) & = g + (m-1)(e - 2(\gamma-1)) - 1 + 3(\gamma-1) ,   \\
 & = \frac{m^2}{2}e + (m-2) \left[ \frac{e}{2}-(\gamma-1) \right] + 3(\gamma-1) ,
\end{aligned}
\]
The curves $X \sim mH$ on the ruled surface $S = \mathbb{P} (\mathcal{O}_Y \oplus \mathcal{O}_Y (E))$, and thus the curves parametrized by $\mathcal{D}_{m,R}$, define $m\!:\!1$ coverings $\varphi : X \to Y$, for which $\varphi_{\ast} \mathcal{O}_X$ admits the same decomposition as in the case of cyclic covers of degree $m$. However, for $m \geq 3$, a general curve $X \sim mH$ is not a cyclic cover of $Y$.
Indeed, the locus in $\mathcal{M}_g$ of degree-$m$ cyclic covers of curves of genus $\gamma$ has dimension
\[
me + 3(\gamma - 1).
\]
Since $e \geq 2\gamma + 1$, it follows that for $m \geq 3$,
\[
 \dim \pi(\mathcal{D}_{m,R}) > me + 3(\gamma - 1).
\]
This strict inequality shows that the general curve $X \sim mH$ does not arise as a cyclic cover of $Y$.
\end{remark}

\medskip

\section{Nonreduced components - proof of Theorem B\texorpdfstring{$_m$}{m}}\label{Sec5}
In this section, we prove \textnormal{Theorem~B$_m$}, which generalizes the main result of \cite{CIK24b}. The overall structure of the proof closely follows the approach developed in \cite{CIK24b}, with one key distinction: \textnormal{Theorem~B}, established in Section~\ref{Sec2}, allows us to extend the statement of \cite[Proposition~6]{CIK24b} from the case of triple covers to arbitrary $m:1$ covers with $m \geq 2$. This broader framework enables us to compute $h^0 (X, N_{X/\mathbb{P}^R})$ in the general case. The auxiliary results needed for this argument were largely developed in \cite{CIK24b} and remain applicable here.

In what follows, we outline the main steps of the proof, referring to \cite{CIK24b} whenever the arguments are identical, rather than repeating them in full.

The setup is essentially the same as in the proof of \textnormal{Theorem~A$_m$} in Section~\ref{Sec4}, with only minor differences in regards to the numerical invariants. We summarize the relevant differences below:

\begin{itemize}[label=\textbullet, leftmargin=1cm, font=\rmfamily]
\item $Y$ is a smooth, irreducible, projective curve of genus $\gamma \geq 3$;

\item $E$ is a divisor on $Y$ of degree $e = \deg E \geq 4\gamma + 5$;

\item $m \geq 3$, $d := me+1$, $g := \binom{m}{2}e + m\gamma$, and $R := e - \gamma + 1$.
\end{itemize}

The proof of \textnormal{Theorem~B$_m$} proceeds in the following steps:
\begin{itemize}[font=\sffamily, leftmargin=1.8cm, style=nextline]
 \item[{\rm \bf Step I.}] We construct the family $\mathcal{F}^{\prime}_{m,R}$ of curves satisfying property \textnormal{(iii)} in \textnormal{Theorem~B$_m$}; then we consider the closure $\mathcal{D}^{\prime}_{m,R} \subset \mathcal{I}_{d, g, R}$ of the subset parametrizing the family $\mathcal{F}^{\prime}_{m,R}$ and show that
 \[
  \dim \mathcal{F}^{\prime}_{m,R} = g + (m - 1)(e - 2(\gamma - 1)) + 3\gamma - 2 + R^2 + 2R.
 \]

 \item[{\rm \bf Step II.}] For a general curve $X \in \mathcal{F}^{\prime}_{m,R}$, we show:
 \[
 \begin{aligned}
 \dim T_{[X]} \mathcal{D}^{\prime}_{m,R} &= h^0(X, N_{X / \mathbb{P}^{R}}) \\
 &= g + (m - 1)(e - 2(\gamma - 1)) + 3\gamma - 1 + R^2 + 2R \\
 &= \dim \mathcal{D}^{\prime}_{m,R} + 1.
 \end{aligned}
 \]

 \item[{\rm \bf Step III.}] We show that $\mathcal{D}^{\prime}_{m,R}$ is an irreducible component of $\mathcal{I}_{me+1, g, R}$.
\end{itemize}

{\bf Step I. Construction of $\mathcal{F}^{\prime}_{m,R}$.}

Let $\mathcal{F}^{\prime}_{m,R}$ denote the family of curves $X := \Psi (X^{\prime}_m) \subset \mathbb{P}^{R}$, where $X^{\prime}_m \in |\mathcal{O}_S (mH + f^{\ast}q)|$, $H$ is the tautological divisor on the ruled surface $S = \mathbb{P} (\mathcal{O}_Y \oplus \mathcal{O}_Y (E))$, and $\Psi$ is the morphism associated to the linear series $|\mathcal{O}_S (H)|$. Here, $Y \in \mathcal{M}_{\gamma}$ is a general curve of genus $\gamma$, $E \in \Div^e (Y)$ is a general effective divisor of degree $e \geq 4\gamma + 5$, and $q \in Y$ is a point.

The dimension of the family $\mathcal{F}^{\prime}_{m,R}$ is computed as follows:

\noindent $\dim \mathcal{F}^{\prime}_{m,R} = $
\begin{itemize}[font=\sffamily, leftmargin=1.3cm, style=nextline]
     \item[$ + $] $3\gamma - 3$ \ : \ number of moduli of smooth curves $Y \in \mathcal{M}_{ \gamma }$ ;

     \item[$ + $] $\gamma$ \ : \ number of parameters for choosing line bundles $\mathcal{O}_{Y} (E) \in \Pic (Y)$ of degree $e \geq 4\gamma + 5$, necessary to determine the ruled surface $S = \mathbb{P} (\mathcal{O}_{Y} \oplus \mathcal{O}_{Y} (E))$ ;

     \item[$ + $] $1$  \ : \ number of parameters for choosing the point $q \in Y$ ;

     \item[$ + $] $\binom{m+1}{2}e - m\gamma + m = \dim |\mathcal{O}_S (mH + f^{\ast}q)|$ \ : \ number of parameters for choosing the curve  $X \sim mH + f^{\ast}q$

     \item[$ + $] $(R+1)^2 - 1$ \ : \ the dimension of the automorphism group $\Aut(\mathbb{P}^R)$;

     \item[$ - $] $(e- \gamma + 2)$ \ : \ the dimension of the subgroup $G_F \subset \Aut(\mathbb{P}^R)$ that fixes the scroll $F$, see \cite[Lemma~6.4, p.~148]{CCFM2009}.
\end{itemize}
A straightforward computation gives:
\begin{equation*}\label{Sec4_DimFprmR}
 \dim \mathcal{F}^{\prime}_{m,R} = \left[ g+(m-1)(e-2(\gamma - 1)) + 3\gamma - 2 \right] + R^2 + 2R .
\end{equation*}

We define $\mathcal{D}^{\prime}_{m,R}$ to be the closure in $\mathcal{I}_{me+1, g, R}$ of the locus parametrizing the family $\mathcal{F}^{\prime}_{m,R}$.

Recall that each curve $X$ from the family $\mathcal{F}^{\prime}_{m,R}$ lies on a cone $F \subset \mathbb{P}^{R}$ over a curve $Y_R \subset \mathbb{P}^{R-1} \subset \mathbb{P}^{R}$. The curve $Y_R$ is the image $\Psi (Y_1)$ of general $Y_1 \in \mathcal{O}_S (1)$. If $p$ is the vertex of $F$, then $X$ contains $p$, and the projection with center $p$ gives an $m:1$ covering morphism $\varphi : X \to Y_R$.

We remark explicitly the following fact that will be used in {\rm Step III}.

\begin{lemma}\label{Sec5_Lemma_Y_is_proj_norm}
\begin{enumerate}[label=(\roman*), leftmargin=*, font=\rmfamily]
 \item $X$ is linearly normal.

 \item $Y_R$ is projectively normal.
\end{enumerate}
\end{lemma}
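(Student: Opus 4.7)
The plan for part (i) is to realize $X \subset \mathbb{P}^R$ as the isomorphic image of $X'_m \subset S$ under $\Psi$, and then compute $h^0(X, \mathcal{O}_X(1))$ via the ideal sheaf sequence on $S$. First I would check that $\Psi|_{X'_m}\colon X'_m \to X$ is an isomorphism: since
\[
 X'_m \cdot Y_0 = (mH + f^{\ast}q) \cdot Y_0 = m (Y_0 + f^{\ast}E) \cdot Y_0 + f^{\ast}q \cdot Y_0 = 0 + 1 = 1,
\]
the curve $X'_m$ meets the contracted locus $Y_0$ only at the single point $q_0$, so by \textnormal{Lemma~\ref{Sec2_Lemma_morphism_psi}(ii)} the restriction $\Psi|_{X'_m}$ is injective and hence an isomorphism onto $X$. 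Under this identification $\mathcal{O}_{X'_m}(H) \cong \mathcal{O}_X(1)$.

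The key step is then the cohomology computation coming from
\[
 0 \to \mathcal{O}_S(-(m-1)H - f^{\ast}q) \to \mathcal{O}_S(H) \to \mathcal{O}_{X'_m}(H) \to 0.
\]
Vanishing of $H^0$ of the leftmost sheaf is immediate from negativity on the fibers of $f$. For the vanishing of $H^1$, I would use the Leray spectral sequence, noting that $f_{\ast}\mathcal{O}_S(-(m-1)H - f^{\ast}q) = 0$ and, by the projection formula together with \eqref{ThmA_Prop_Decomp_Coro},
\[
 R^1 f_{\ast}\mathcal{O}_S(-(m-1)H - f^{\ast}q) \cong \bigoplus_{k=1}^{m-2} \mathcal{O}_Y(-kE - q),
\]
all of whose summands have negative degree. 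Consequently $h^0(X, \mathcal{O}_X(1)) = h^0(S, \mathcal{O}_S(H)) = 1 + h^0(Y, \mathcal{O}_Y(E)) = R+1$, where the last equality uses Riemann--Roch and the fact that $E$ is non-special of degree $e \geq 4\gamma + 5$. Since every hyperplane section of the cone $F$ has degree $e < me + 1 = \deg X$, the curve $X$ cannot lie inside a hyperplane, so the restriction $H^0(\mathbb{P}^R, \mathcal{O}_{\mathbb{P}^R}(1)) \to H^0(X, \mathcal{O}_X(1))$ is injective and, by equality of dimensions, an isomorphism. This gives linear normality.

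For part (ii), the curve $Y_R \subset \mathbb{P}^{R-1}$ is the image of $Y$ under the complete embedding associated with the line bundle $\mathcal{O}_Y(E)$, where $E$ is a general divisor of degree $e \geq 4\gamma + 5$. Since $e \geq 2\gamma + 1$, the classical normal generation theorem (Castelnuovo--Mumford) ensures that $\mathcal{O}_Y(E)$ is normally generated, i.e. the multiplication maps $\mathrm{Sym}^k H^0(Y, \mathcal{O}_Y(E)) \to H^0(Y, \mathcal{O}_Y(kE))$ are surjective for every $k \geq 1$. This is precisely projective normality of $Y_R$.

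The main technical point is the $H^1$ vanishing for $\mathcal{O}_S(-(m-1)H - f^{\ast}q)$, which is what forces the use of Theorem~A's corollary \eqref{ThmA_Prop_Decomp_Coro}; once this decomposition is available, the argument is essentially bookkeeping. The edge case $m = 3$, where the sum above degenerates to the single summand $\mathcal{O}_Y(-E-q)$, should be checked but causes no difficulty since $\deg(-E-q) < 0$.
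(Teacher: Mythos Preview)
Your argument is correct and follows essentially the same approach as the paper: the same ideal-sheaf sequence on $S$, the same $H^0$ and $H^1$ vanishings for $\mathcal{O}_S(-(m-1)H - f^{\ast}q)$, and the same appeal to classical normal generation for part~(ii) (the paper cites \cite{GL86} rather than Castelnuovo--Mumford, but the conclusion is identical in this degree range). You supply more detail than the paper does---the explicit check that $\Psi|_{X'_m}$ is an isomorphism, the Leray argument for the $H^1$ vanishing via \eqref{ThmA_Prop_Decomp_Coro}, and the nondegeneracy step---but the skeleton is the same.
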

\begin{proof}
Consider
\[
 0 \to \mathcal{O}_S (-(m-1)H - f^{\ast}) \to \mathcal{O}_S (H) \to \mathcal{O}_{X^{\prime}_m} (H) \to 0
\]
and apply $\Gamma (S, \cdot)$. Since $H^0 (\mathcal{O}_S (-(m-1)H - f^{\ast})) = H^1 (\mathcal{O}_S (-(m-1)H - f^{\ast})) = 0$, we conclude that $H^0 (\mathcal{O}_S (H)) = H^0 (\mathcal{O}_{X^{\prime}_m} (H)) = \mathbb{C}^{R+1}$, $R = e-\gamma + 1$.

The linear normality of $Y_R$ follows by the Riemann-Roch theorem. Further, the projective normality of $Y_R$ follows from \cite{GL86} since $\deg Y_R = e \geq 4\gamma + 5$.
\end{proof}

\medskip

{\bf Step II.} Computation of the dimension of $T_{[X]} \mathcal{D}^{\prime}_{m,R}$.

We now compute $h^0 (X, N_{X/\mathbb{P}^R})$. To this end, we exploit the fact that projection from the vertex $p$ of the cone $F$ onto a general hyperplane $\Delta \subset \mathbb{P}^R$ realizes $X$ as an $m:1$ covering
\[
 \varphi : X \to \Delta \cap F =: Y_R .
\]
The curve $Y_R \subset \Delta \cong \mathbb{P}^{R-1}$ is embedded by the restriction ${\mathcal{O}_S (1)}_{|_{Y_1}}$, where $Y_1 \in |\mathcal{O}_S (1)|$ is a general smooth section of the ruled surface $S$. Since $Y_R \cong Y_1 \cong Y$ and ${\mathcal{O}_S (1)}_{|_{Y_1}} \cong \mathcal{O}_Y (E)$, where $Y \in \mathcal{M}_{\gamma}$ and $E \in \Div^{e} (Y)$ are general, it follows that $[Y_R] \in \mathcal{I}_{e, \gamma, R-1}$ is a general point. Following the same reasoning as in the complete embedding case in the proof of \textnormal{Theorem~A$_m$}, we obtain:
\begin{itemize}
 \item $h^0 (Y_R, N_{Y_R/ \mathbb{P}^{R-1}}) = Re - (R-4)(\gamma - 1)$, since $\mathcal{I}_{e, \gamma, R-1}$ is irreducible, generically smooth, and has the expected dimension $\lambda_{e, \gamma, R-1} = Re - (R-4)(\gamma - 1)$ ;

 \item $h^0 (Y_R, N_{Y_R/ \mathbb{P}^{R-1}} (-1)) = R$ ;

 \item $h^0 (Y_R, N_{Y_R/ \mathbb{P}^{R-1}} (-k)) = 0$ for $k \geq 2$ .
\end{itemize}

Since $X$ is the image under $\Psi$ of a general curve $X_m \in |\mathcal{O}_S (mH + f^{\ast}q)|$ under $\Psi$, the exact sequence \eqref{NormBundSeq_p_on} from \textnormal{Proposition~\ref{Prop_NormBundSeq}} implies
\begin{equation}\label{NormBundSeq_p_on_Sec5}
 0 \to \mathcal{O}_X (1) \otimes \mathcal{O}_X (R_{\varphi}+2p) \to N_{X / \mathbb{P}^R} \to \varphi^{\ast} N_{Y_R / \mathbb{P}^{R-1}} \otimes \mathcal{O}_X (p) \to 0 .
\end{equation}

We compute the degree of the line bundle $\mathcal{O}_X (1) \otimes \mathcal{O}_X (R_{\varphi}+2p)$. Since $\deg R_{\varphi} = 2g-2 - 2m(\gamma - 1) = (me+2)(m-1)$ and $\deg \mathcal{O}_X (1) = me+1$, we obtain:
\[
 \begin{aligned}
  \deg (\mathcal{O}_X (1) \otimes \mathcal{O}_X (R_{\varphi}+2p))
  & = 2g-2 - 2m(\gamma - 1) + me + 1 + 2 \\
  & = 2g-2 + m (e - 2(\gamma - 1)) + 3 .
 \end{aligned}
\]
Since $e \geq 4\gamma + 5$ by assumption, the sequence \eqref{NormBundSeq_p_on_Sec5} is exact on global sections, and by Riemann–Roch we get:
\[
 \begin{aligned}
  h^0 (X, \mathcal{O}_X (1) \otimes \mathcal{O}_X (R_{\varphi}+2p))
  & = 2g-2 + m (e - 2(\gamma - 1)) + 2 - g + 2 \\
  & = g + m (e - 2(\gamma - 1)) + 2 .
 \end{aligned}
\]
Hence,
\[
 h^0 (X, N_{X / \mathbb{P}^R}) = h^0 (X, \mathcal{O}_X (1) \otimes \mathcal{O}_X (R_{\varphi}+2p)) +   h^0 (X, \varphi^{\ast} N_{Y_R / \mathbb{P}^{R-1}} \otimes \mathcal{O}_X (p)) .
\]
To compute the second term, we apply the projection formula \cite[Ex.~III.8.2, p.252]{Hart77}. By \textnormal{Theorem~B}, we have:
\[
 \varphi_{\ast} \mathcal{O}_X (p) \cong \mathcal{O}_{Y_R} \oplus \mathcal{O}_{Y_R} (-1) \oplus \left[ \mathcal{O}_{Y_R} (-2) \oplus \cdots \oplus \mathcal{O}_{Y_R} (-(m-1)) \right] \otimes \mathcal{O}_{Y_R} (-q) .
\]
Therefore,
\[
\begin{aligned}
 h^0 (X, \varphi^{\ast} N_{Y_R / \mathbb{P}^{R-1}} \otimes \mathcal{O}_X (p)) & = h^0 (Y_R, N_{Y_R / \mathbb{P}^{R-1}} \otimes \varphi_{\ast} \mathcal{O}_X (p)) \\
 & = h^0 (Y_R, N_{Y_R / \mathbb{P}^{R-1}}) + h^0 (Y_R, N_{Y_R / \mathbb{P}^{R-1}}(-1)) \\
 & + \sum^{m-1}_{k=2} h^0 (Y_R, N_{Y_R / \mathbb{P}^{R-1}}(-k) \otimes \mathcal{O}_{Y_R} (-q)) .
\end{aligned}
\]
Since $h^0 (Y_R, N_{Y_R / \mathbb{P}^{R-1}}(-k)) = 0$ for $k \geq 2$, the sum vanishes, and we are left with:
\[
 h^0 (X, \varphi^{\ast} N_{Y_R / \mathbb{P}^{R-1}} \otimes \mathcal{O}_X (p)) = Re - (R-4)(\gamma - 1) + R = R^2 + R + 4(\gamma-1) .
\]
Putting everything together:
\[
 \begin{aligned}
  h^0 (X, N_{X / \mathbb{P}^R}) & =  g + m (e - 2(\gamma - 1)) + 2 + R^2 + R + 4(\gamma-1) \\
  & = g + m (e - 2(\gamma - 1)) + 2 + R^2+2R - (e-2(\gamma-1)) + 3(\gamma-1) \\
  & = \left[ g + (m-1)(e - 2(\gamma-1)) + 3\gamma-1 \right] + R^2+2R .
 \end{aligned}
\]
In other words,
\[
 \dim T_{[X]} \mathcal{D}^{\prime}_{m,R} = h^0 (X, N_{X / \mathbb{P}^R}) = \dim \mathcal{D}^{\prime}_{m,R} + 1 .
\]

\medskip

{\bf Step III.} Proof of $\mathcal{D}^{\prime}_{m,R}$ being an irreducible component of $\mathcal{I}_{d, g, R}$.

The argument in this steps essentially follows the strategy of {\rm Step III} in our earlier work \cite{CIK24b}. By definition,
\[
 \mathcal{D}^{\prime}_{me+1,g,R} \subset \mathcal{I}_{me+1,g,R}
\]
is the closure of the locus parametrizing smooth, integral curves of degree $me+1$ and genus $g$ that lie on cones in $\mathbb{P}^{R}$ over curves parametrized by $\mathcal{I}_{e, \gamma, R-1}$. A general point $[X] \in \mathcal{D}^{\prime}_{me+1,g,R}$ corresponds to a curve in the linear equivalence class $mH + f^{\ast}q$ on the desingularization $S$ of a cone $F \subset \mathbb{P}^R$ over $Y_R \subset \mathbb{P}^{R-1}$, where $[Y_R] \in \mathcal{I}_{e, \gamma, R-1}$ is general. It is clear from construction that  $\mathcal{D}^{\prime}_{me+1,g,R}$ is irreducible. To conclude that it forms an irreducible component of the Hilbert scheme, it suffices to show that any flat deformation of a general curve in $\mathcal{D}^{\prime}_{me+1,g,R}$ remains a curve on a cone in $\mathbb{P}^R$ over a curve from $\mathcal{I}_{e, \gamma, R-1}$.

\begin{lemma}\label{Sec5DeformLemma}
Let $p_{\mathcal{X}} : \mathcal{X} \to T$ be a flat family of projective curves in $\mathbb{P}^R$, and suppose there exists a closed point $t_0 \in T$ such that:
\begin{enumerate}[label=(\roman*), leftmargin=*, font=\rmfamily]
 \item  $\mathcal{X}_{t_0}$ is a smooth, integral, projectively normal curve of genus $g = \binom{m}{2}e + m\gamma$ and degree $me + 1$;

 \item $\mathcal{X}_{t_0}$ lies on a cone $F \subset \mathbb{P}^R$ over a curve $Y$ corresponding to a general point in $\mathcal{I}_{e, \gamma, R-1}$.
\end{enumerate}
Then there exists a neighborhood $U \subset T$ of $t_0$ such that for all closed points $t \in U$, the fiber $\mathcal{X}_t$ is again a curve lying on a cone over a smooth, integral, projectively normal curve of genus $\gamma$ and degree $e$ in $\mathbb{P}^{R-1}$.
\end{lemma}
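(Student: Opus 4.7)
The plan is to reconstruct the cone $F$ intrinsically from $\mathcal{X}_{t_0}$ as the scheme cut out by its defining quadrics, and then propagate this construction through the flat family to produce nearby cones $\mathcal{F}_t$ containing $\mathcal{X}_t$. The argument parallels the one used in \cite{CIK24b}, but \textnormal{Theorem~B} now allows us to handle arbitrary $m \geq 3$ rather than only $m=3$.

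The first step is to show that $F$ is scheme-theoretically cut out by the quadrics containing $\mathcal{X}_{t_0}$. Two inputs are needed: that $Y$ is defined by quadrics in $\mathbb{P}^{R-1}$ (hence $F$ is defined by quadrics in $\mathbb{P}^R$), which follows from Green--Lazarsfeld \cite{GL86} since $\deg Y = e \geq 4\gamma + 5$; and that $H^0(\mathbb{P}^R, \mathcal{I}_{\mathcal{X}_{t_0}/\mathbb{P}^R}(2)) = H^0(\mathbb{P}^R, \mathcal{I}_{F/\mathbb{P}^R}(2))$. The latter reduces, via the short exact sequence
\[
 0 \to \mathcal{I}_{F/\mathbb{P}^R}(2) \to \mathcal{I}_{\mathcal{X}_{t_0}/\mathbb{P}^R}(2) \to \mathcal{I}_{\mathcal{X}_{t_0}/F}(2) \to 0,
\]
to showing $H^0(F, \mathcal{I}_{\mathcal{X}_{t_0}/F}(2)) = 0$. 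Lifting to the desingularization $\Psi : S \to F$, one verifies that this embeds into $H^0(S, \mathcal{O}_S((2-m)H - f^{\ast}q))$, which vanishes for $m \geq 3$, since $f_{\ast}\mathcal{O}_S(kH) = 0$ for every $k \leq -1$.

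The second step uses the projective normality of $\mathcal{X}_{t_0}$ established in Lemma~\ref{Sec5_Lemma_Y_is_proj_norm}, which gives $H^1(\mathbb{P}^R, \mathcal{I}_{\mathcal{X}_{t_0}/\mathbb{P}^R}(k)) = 0$ for all $k \geq 1$. Since $\{\mathcal{I}_{\mathcal{X}_t/\mathbb{P}^R}\}_{t\in T}$ is flat (the Hilbert polynomial of $\mathcal{X}_t$ is constant), upper semi-continuity of $h^1$ together with constancy of $\chi$ yields $h^0(\mathbb{P}^R, \mathcal{I}_{\mathcal{X}_t/\mathbb{P}^R}(2)) = h^0(\mathbb{P}^R, \mathcal{I}_{F/\mathbb{P}^R}(2))$ throughout a Zariski neighborhood $U$ of $t_0$. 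Cohomology and base change then makes $\pi_{\ast}\mathcal{I}_{\mathcal{X}/U \times \mathbb{P}^R}(2)$ locally free on $U$ with formation compatible with base change, and its sections cut out a relative subscheme $\mathcal{F} \subset U \times \mathbb{P}^R$ satisfying $\mathcal{F}_{t_0} = F$ and $\mathcal{X} \subset \mathcal{F}$.

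Finally, one argues that after possibly shrinking $U$, each fiber $\mathcal{F}_t$ is itself a cone over a curve $Y_t \in \mathcal{I}_{e,\gamma,R-1}$. This rests on the fact that the locus of such cones is open in the relevant irreducible component of the Hilbert scheme of surfaces with Hilbert polynomial $P_F$, which follows from the genericity of $[Y]$ in the generically smooth component of $\mathcal{I}_{e,\gamma,R-1}$ (Proposition~\ref{Sec3_Hilb_Sch_irred}). The inclusion $\mathcal{X}_t \subset \mathcal{F}_t$ is then automatic. The hardest step is precisely this last openness assertion: one must rule out flat deformations of $F$ in $\mathbb{P}^R$ that are not themselves cones — for instance those that would smooth the vertex singularity while still containing the deforming $\mathcal{X}_t$. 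The rigidity required comes from the strong positivity assumption $e \geq 4\gamma + 5$ and the projective normality of $Y$, which severely restrict the flat deformations of $F$ in $\mathbb{P}^R$.
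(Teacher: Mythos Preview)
Your overall strategy—recover the cone $F$ as the zero locus of the quadrics through $\mathcal{X}_{t_0}$ and then propagate through the family—matches the paper's in spirit, and your first two steps are fine. Two small corrections first: Lemma~\ref{Sec5_Lemma_Y_is_proj_norm} only proves that $X$ is \emph{linearly} normal; the projective normality you invoke is hypothesis~(i) of the lemma you are proving, so cite that instead. Also, \textnormal{Theorem~B} plays no role in this lemma—it enters in Step~II of the proof of \textnormal{Theorem~B$_m$}, not here.

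The substantive gap is in your third step. You construct $\mathcal{F} \subset U \times \mathbb{P}^R$ as the relative zero locus of the quadrics, but you have not shown that $\mathcal{F} \to U$ is flat, which you would need before invoking any component of the Hilbert scheme of surfaces. More seriously, the assertion that cones over curves in $\mathcal{I}_{e,\gamma,R-1}$ form an open locus in ``the relevant component'' is not something Proposition~\ref{Sec3_Hilb_Sch_irred} gives you: that proposition concerns the Hilbert scheme of \emph{curves} $Y$, and says nothing about deformations of the cone $F$ inside $\mathbb{P}^R$. Controlling only $h^0(\mathcal{I}_{\mathcal{X}_t}(2))$ is not enough to force the common zero locus of those quadrics to have constant Hilbert polynomial.

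The paper closes this gap differently. Instead of tracking just the number of quadrics, it invokes the explicit presentation of $I(\mathcal{X}_{t_0})$ from \cite{CG99}, which records the generators in degrees $2$ and $m+1$ \emph{together with} the first syzygies in degrees $3$ and $m+2$. Very flatness (from the projective normality hypothesis) then forces this entire presentation to deform with $t$. It is the syzygy control—knowing that the relations among the degree-$2$ generators stay in degree $3$—that pins down the Hilbert polynomial of the quadric locus $\mathcal{F}_t$, after which the paper defers to \cite{CIK24b}, \cite{CLM96}, and \cite{Cil87} for the conclusion that $\mathcal{F}_t$ is again a cone over a curve of the same invariants. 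Your sketch would be repaired by replacing the openness appeal with this resolution-plus-very-flatness argument.
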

\begin{proof}
The lemma follows by the same reasoning as in \cite[Lemma~8]{CIK24b}. We briefly sketch its proof.

The generality of $Y$, along with \cite[Proposition~2]{CG99}, implies that the ideal of $\chi_{t_0}$ admits a presentation
\begin{equation*}
 P_2 \to P_1 \to I(\chi_{t_0}) \to 0 ,
\end{equation*}
where
\begin{itemize}
 \item $P_1 = \bigoplus\limits^{R-1}_1 \mathcal{O}_{\mathbb{P}^{R}}(-m-1) \oplus \bigoplus\limits^{\beta_1}_{j=1} \mathcal{O}_{\mathbb{P}^{r}} (-2)$

 \item $P_2 = \bigoplus\limits^{\binom{R-1}{2} + \beta_1}_1 \mathcal{O}_{\mathbb{P}^{R}}(-m-2) \oplus \bigoplus\limits^{\beta_2}_{j=1} \mathcal{O}_{\mathbb{P}^{R}} (-3)$ ,
\end{itemize}
and where $\beta_1$ and $\beta_2$ are the Betti numbers arising in the minimal free resolution
\[
 \cdots \to \bigoplus\limits^{\beta_2}_{j=1} \mathcal{O}_{\mathbb{P}^{R}} (-3) \to \bigoplus\limits^{\beta_1}_{j=1} \mathcal{O}_{\mathbb{P}^{r}} (-2) \to I(Y) \to 0
\]
of the ideal of $Y \subset \mathbb{P}^{R-1}$. Since $Y$ is projectively normal, the family $p_{\mathcal{X}} : \mathcal{X} \to T$ is very flat; see \cite[Ex.~III.9.5, p.~266]{Hart77}. Moreover, because $m \geq 4$, it follows that the zero locus of the degree-two generators of the ideal $I(\chi_t)$, for any flat deformation $\chi_{t}$ of $\chi_{t_0}$, is a cone over a curve of the same genus and degree as $Y$. For additional details, we refer the reader to \cite{CIK24b}, \cite{CLM96} and \cite{Cil87}.
\end{proof}

The lemma implies that every deformation of $X$ remains a curve (with the same invariants) lying on a cone $\tilde{F} \subset \mathbb{P}^R$ over a curve $\tilde{Y}$, where $[\tilde{Y}] \in \mathcal{I}_{e, \gamma, R-1}$ is general. By \cite[Proposition~5]{CIK24b}, such a curve must necessary be in the linear series
\[
 |\mathcal{O}_{\tilde{S}} (m\tilde{H} + \tilde{f}^{\ast} \tilde{q})| ,
\]
where $\tilde{S} = \mathbb{P} (\mathcal{O}_{\tilde{Y}} \oplus \mathcal{O}_{\tilde{Y}} (1))$ is the desingularization of $\tilde{F}$, with natural projection $\tilde{f} : \tilde{S} \to \tilde{Y}$, $\tilde{H}$ is the tautological divisor on $\tilde{S}$ and $\tilde{q} \in \tilde{Y}$ is a point. From the definition of $\mathcal{F}^{\prime}_{m,R}$, it follows that $\tilde{X}$ also belongs to this family. Therefore, $\mathcal{D}^{\prime}_{m,R}$ is an irreducible  component of the Hilbert scheme $\mathcal{I}_{me+1, g, e-\gamma+1}$.

This completes the proof of \textnormal{Theorem~B$_m$}.

\medskip

\begin{remark}
In the proof of \textnormal{Theorem~B$_m$}, we made essential use of the projective normality of the hyperplane sections of the cone $F$ on which the curves $X$ from the family $\mathcal{F}^{\prime}_{m,R}$ lie. This was crucial for establishing the very flatness of the family $p_{\mathcal{X}} : \mathcal{X} \to T$ in \textnormal{Lemma~\ref{Sec5DeformLemma}}. As a result, we could not apply an argument analogous to that used in the proof of \textnormal{Theorem~A$_m$} to deduce that a family $\mathcal{F}^{\prime}_{m,r}$, obtained via projections of the curves in $\mathcal{F}^{\prime}_{m,R}$ into a lower-dimensional projective space $\mathbb{P}^r \subsetneq \mathbb{P}^R$, forms an irreducible component.

It remains an open and interesting question whether such a projected family defines a non-reduced component of the Hilbert scheme or is properly contained in a generically smooth component. More broadly, it would be worthwhile to identify components of the Hilbert scheme of curves whose general member is not linearly normal.
\end{remark}

\end{document}